\newtheorem{claim}{Claim}
\newtheorem{thm}{Theorem}[section]
\newtheorem{cor}[thm]{Corollary}
\newtheorem*{theorem*}{Theorem}
\theoremstyle{remark}
\theoremstyle{definition}
\newtheorem{definition}[thm]{Definition}
\newtheorem*{definition*}{Definition}
\newtheoremstyle{named}{}{}{\itshape}{}{\bfseries}{.}{.5em}{\thmnote{#3 }#1}
\theoremstyle{named}
\subjclass[2020]{Primary 05C57; Secondary 05C83, 05C10}
\keywords{Cops and robbers, Graph minors, Excluded minors, Linklessly embeddable graphs}
\newcommand{\N}{\mathbb{N}}
\DeclareMathOperator{\diam}{diam}
\begin{document}

\title{Improved bounds on the cop number when forbidding a minor}

\author{Franklin Kenter}
\address{Mathematics Department, United States Naval Academy, Annapolis, U.S.A.}
\email{kenter@usna.edu}
\urladdr{www.usna.edu/Users/math/kenter/}

\author{Erin Meger}
\address{School of Computing, Queen's University, Kingston, Canada}
\email{erin.meger@queensu.ca}
\urladdr{www.erinmegermath.com}

\author{J\'er\'emie Turcotte}
\address{Department of Mathematics and Statistics, McGill University, Montr\'eal, Canada}
\email{mail@jeremieturcotte.com}
\urladdr{www.jeremieturcotte.com}

\thanks{The first author is supported by the National Science Foundation (NSF) and the Office of Naval Research (ONR). The third author are supported by the Natural Sciences and Engineering Research Council of Canada (NSERC). Le troisième auteur est support\'e par le Conseil de recherches en sciences naturelles et en g\'enie du Canada (CRSNG)}

\begin{abstract}
    Andreae (1986) proved that the cop number of connected $H$-minor-free graphs is bounded for every graph $H$. In particular, the cop number is at most $|E(H-h)|$ if $H-h$ contains no isolated vertex, where $h\in V(H)$. The main result of this paper is an improvement on this bound, which is most significant when $H$ is small or sparse, for instance when $H-h$ can be obtained from another graph by multiple edge subdivisions. Some consequences of this result are improvements on the upper bound for the cop number of $K_{3,t}$-minor-free graphs, $K_{2,t}$-minor-free graphs and linklessly embeddable graphs.
\end{abstract}

\maketitle

\section{Introduction}

The game of cops and robbers is a combinatorial game played on a graph $G$ \cite{nowakowski_vertex--vertex_1983,quilliot_these_1978,aigner_game_1984}. One player plays as the cops and the other plays as the robber. The objective for the cops is the capture the robber by occupying the same vertex as the robber with one or more cops; the objective for the robber is to evade capture forever.  At the start of the game, the cops player places $m$ cops on (not necessarily distinct) vertices of the graph, then the other player places the robber on a vertex. Thereafter, starting with the cops player, the players alternate moving any of their pieces (cops or robber) to an adjacent vertex. While there are many variants of the game, we focus on the classical version of the game where a player may decline to move any (or all) of their pieces on their turn. For a graph $G$, the {\it cop number}, denoted by $c(G)$, is the minimum number of cops sufficient for the cops player to have a winning strategy \cite{aigner_game_1984}.

This game has a rich history in topological graph theory as surveyed in \cite{bonato_topological_2017}.  A classical result by Aigner and Fromme is that for any connected planar graph $G$, the cop number is at most 3 \cite{aigner_game_1984}. A long series of results have also established a relationship between the genus of the graph, $g$, and the cop number. Quilliot's \cite{quilliot_short_1985} bound is that $c(G) \le 2g + 3$; this has since been improved to $c(G) \le \lfloor \frac32 g + 3 \rfloor$ by Schr{\"o}der \cite{schroder_copnumber_2001} and subsequently to $c(G) \le \frac43g  + \frac{10}3$ by Bowler, Erde, Lehner and Pitz \cite{bowler_bounding_2021}. Lehner \cite{lehner_cop_2021} very recently showed that for any connected toroidal graph $c(G) \le 3$, solving a question of Andreae \cite{andreae_pursuit_1986}. Originally, Schr{\"o}der \cite{schroder_copnumber_2001} conjectured that $c(G) \le g + 3$; however, more daringly, Bonato and Mohar \cite{bonato_topological_2017} conjectured that, in fact, $c(G) \le g^{1/2 + o_g(1)}$.

An \emph{edge contraction} is an operation by which we obtain from $G$ a new graph $G'$ by identifying the two end vertices of an edge and removing resulting loops and multiple edges. We say $H$ is a \emph{minor} of $G$ if a graph isomorphic to $H$ can be obtained from $G$ by removing vertices and edges and by contracting edges. Given a family of graphs $\mathcal H=\{H_i\}_{i\in I}$, we say $G$ is \emph{$\mathcal H$-minor-free} if $H_i$ is not a minor of $G$ for every $i\in I$. If $\mathcal H=\{H\}$, we simply write that $G$ is $H$-minor-free. Many topological classes of graphs can be defined using forbidden minors. Most notably, Wagner \cite{wagner_uber_1937} proved that planar graphs are exactly the $\{K_5,K_{3,3}\}$-minor-free-graphs (where $K_t$ is the complete graph on $t$ vertices and $K_{s,t}$ is the complete bipartite graph with parts of size respectively $s$ and $t$). More generally, Robertson and Seymour \cite{robertson_graph_2004} famously proved that for any minor-closed family of graphs $\mathcal F$ there exists a finite set of graphs $\mathcal H$ such that $\mathcal F$ are exactly the $\mathcal H$-minor-free graphs.

Shortly after Aigner and Fromme's result on the cop number of planar graphs, Andreae \cite{andreae_pursuit_1986} studied the cop number of graphs with a forbidden minor. In particular, Andreae proved the following theorem.

\begin{restatable}{thm}{thmandreaemain}\label{thm:andreaemain}\cite{andreae_pursuit_1986}
    Let $H$ be a graph and $h\in V(H)$ be a vertex such that $H-h$ has no isolated vertex. If $G$ is a connected $H$-minor-free graph, then $c(G) \leq |E(H-h)|$.
\end{restatable}

Andreae also proves, using similar but more specific methods, that connected $K_{3,3}$-minor-free graphs and $K_5$-minor-free graphs have cop number at most 3, strengthening Aigner and Fromme's result, as well as showing that connected $K_{3,3}^-$-minor-free graphs and $K_5^-$-minor-free graphs (here $H^-$ designates the graph $H$ with one edge removed) have cop number at most 2, and that when forbidding the $(t+1)$-vertex wheel graph $W_t$ as a minor the cop number is at most $\left\lceil\frac{t}{3}\right\rceil+1$.

Joret, Kaminsky and Theis \cite{joret_cops_2010}, inspired by Andreae's paper, have considered the cop number when forbidding a subgraph or an induced subgraph, and when bounding the treewidth of the graph. The question of bounding the cop number of graphs with one or multiple forbidden induced subgraphs has gained traction is recent years, see for instance \cite{sivaraman_application_2019,chudnovsky_cops_2024,masjoody_cops_2020} and references therein. However, there have been no improvements to the upper bounds on the cop number of graphs with an excluded minor since Andreae's paper.

Our main result generalizes and improves the bounds in \cref{thm:andreaemain}. As it requires some technical definitions, we postpone its statement to \cref{sec:main}. The improved bounds are most significant when $H$ is sparse, for instance graphs when $H-h$ is a subdivision of a much smaller graph, or is small. In particular, we show a few notable applications of our main result:

\begin{itemize}
    \item We show that the cop number of linklessly embeddable graphs is at most 6; previously the upper bound was 9.
    \item We show that our main result encompasses \cref{thm:andreaemain}, as well as many of Andreae's more specific results mentioned earlier.
    \item We improve the known upper bounds on the cop number of $K_{3,t}$-minor-free graphs and $K_{2,t}$-minor-free graphs by a factor of 2.
    \item We provide an example where our method improves the cop number by a factor of $4$.
\end{itemize}

In \cref{sec:notation}, we define the notation we will be using throughout this paper. In \cref{sec:paths}, we recall the classical path guarding strategy for cops and deduce a more convenient form for our use. In \cref{sec:main}, we will state and prove our main result. Finally, in \cref{sec:applications}, we derive the corollaries of our main result mentioned above.

\section{Notation}\label{sec:notation}

We begin with some notation, which is mostly standard. For $n\in \N$, we write $[n]=\{1,\dots,n\}$. If $A$ is a set, we will use the notation $\binom{A}{2}=\left\{\{u,v\}:u,v\in A, u\neq v\right\}$ for the set of unordered pairs of distinct elements of $A$; in general we will write $uv$ or $vu$ to represent the pair $\{u,v\}$. If $f:X\rightarrow Y$ and $A\subseteq X$, then $f(A)=\{f(a):a\in A\}$ is the image of $A$. If $B\subseteq Y$, then $f^{-1}(B)=\{a\in A : f(a)\in B\}$ is the pre-image of $B$.

Let $G$ be a graph, which we always consider to be simple and finite. We denote by $V(G)$ the set of vertices of $G$ and by $E(G)\subseteq \binom{V(G)}{2}$ the set of edges of $G$. If $v\in V(G)$, we write $N(v)$ for the \emph{neighbourhood} of $v$ and $N[v]=N(v)\cup\{v\}$ for the \emph{closed neighbourhood} of $v$. Given a set $S\subseteq V(G)$, we write $N[S]=\bigcup_{v\in S}N[v]$ for the \emph{closed neighbourhood} of $S$, and we define the \emph{coboundary} of $S$ as the set $N(S)=N[S]\setminus S$, i.e. the vertices adjacent to but not in $S$ (note that $N(S)$ is not $\bigcup_{v\in S}N(v)$, as it is often defined).

If $S\subseteq V(G)$, we write $G[S]$ for the subgraph of $G$ induced by $S$. We also write $G-S$ for $G[V(G)\setminus S]$. If $x\in V(G)$, we use the shorthand $G-x$ for $G-\{x\}$. If $A\subseteq \binom{V(G)}{2}$, then we write $G-A$ for the graph on vertex set $V(G)$ with edge set $E(G)\setminus A$. If $e\in E(G)$, then we write $G-e$ for $G-\{e\}$.

A \emph{matching} in $G$ is a set of edges which pairwise do not share vertices.

We denote by $U(G)$ the graph obtained from $G$ by adding an universal vertex (a vertex adjacent to all over vertices). Given graphs $G_1,G_2$, we write $G_1+G_2$ for the graph obtained as the disjoint union of $G_1$ and $G_2$. Analogously, $mG$ is the graph obtained as the disjoint union of $m$ copies of $G$.

We use the convention that the length of a path or a cycle is the number of edges it contains. For non-empty paths this is the number of vertices minus 1; for cycles this is exactly the number of vertices. A path may have length 0, whereas a cycle necessarily has length at least 3. When a $u-v$ path has length 1, we will often write it simply as the edge $uv$ to simplify notation. The \emph{end vertices} of a path are its first and last vertices (given some arbitrary orientation of the path). In general, our cycles will have a clearly defined \emph{root} vertex; we will say the \emph{end vertices} of the cycle to be this root. We say the \emph{interior} of a path or a cycle is its set of vertices except for the end vertices. Given a non-empty $x-y$ path (or $x$-rooted cycle, if $x=y$) $P$ and $z\in V(P)$, write $P[x,z]$ for the subpath of $P$ with ends $x$ and $z$, and $P[z,y]$ analogously. If $P_1$ and $P_2$ are  internally-disjoint, respectively $x-y$ and $y-z$ paths, then we write $P_1\oplus P_2$ for their concatenation; this is also a path except when $x=z$ in which case it is an $x$-rooted cycle. For convenience, we write $\emptyset$ for the empty path.

Given an $x-y$ path $P$, we say $\{X_i\}_{i\in I}$, a family of subsets of vertices of $P$, is \emph{non-intertwined} if there do not exist distinct $i,j\in I$ and three distinct vertices $a_1,a_2\in X_i$, $b\in X_j$ such that these vertices appear in the order $a_1-b-a_2$ in $P$ (perhaps with additional vertices in between). If $P$ is an $x$-rooted cycle such that the neighbours of $x$ in the cycle are $y,z$, then we say $\{X_i\}_{i\in I}$, a family of subsets of vertices of $P$, is non-intertwined if it is is a non-intertwined family of subsets of the path $P-xy$ or of the path $P-xz$.

\section{Guarding paths}\label{sec:paths}

Let $G$ be a connected graph, and $\mathcal C$ be the set of cops playing the game of cops and robbers. We say a cop $C\in \mathcal C$ \emph{guards} a subset $S\subseteq V(G)$ if its strategy guarantees that if the robber enters $S$ it is immediately captured by $C$. For now, we require that this strategy be independent of the strategy of the other cops, that is the strategy still works even if the other cops change their strategies. Let us note that if $C$ guards $S$, then it also guards any $S'\subseteq S$; we are not claiming that $S$ is an exhaustive list of all vertices this cop is blocking the robber from entering. In general, if one vertex is guarded by multiple cops, we will select one of the cops to guard this vertex. In particular, if all the vertices guarded by $C$ are also guarded by other cops, this cop can be given a new strategy as it is currently useless.

The following result of Aigner and Fromme, originally used to prove that planar graphs have cop number at most 3, is one of the main tools in the study of the game of cops and robbers.
\begin{thm}\cite{aigner_game_1984}\label{thm:aignerpath}
	If $G$ is a connected graph, $u,v\in V(G)$ and $P$ is a shortest $u-v$ path, then there exists a strategy for one cop to, after a finite number of turns, guard $P$.
\end{thm}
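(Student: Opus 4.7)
The plan is to use a shadow-strategy argument. Parameterise $P$ as $u = p_0, p_1, \dots, p_k = v$, and write $d$ for the graph distance in $G$. I would define the shadow index
\[
    \phi(r) \;=\; \min\!\left(k,\ \min_{0 \le i \le k}\bigl(i + d(r, p_i)\bigr)\right)
\]
for each $r \in V(G)$. The whole strategy rests on two properties of $\phi$ that I would verify first. Firstly, $\phi(p_j) = j$ for every $p_j \in V(P)$: since $P$ is a shortest path, $d(p_j, p_i) = |i - j|$, so $i + d(p_j, p_i) = i + |i-j| \ge j$ with equality at $i = j$, and the cap at $k$ does not interfere since $j \le k$. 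Secondly, $\phi$ is $1$-Lipschitz on $G$: from $d(r', p_i) \le d(r, p_i) + 1$ for adjacent $r, r'$ one gets $\min_i(i + d(r', p_i)) \le \min_i(i + d(r, p_i)) + 1$, and truncating at $k$ preserves this.

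The cop's strategy is then the following: first walk to $p_0$, which takes finitely many turns since $G$ is connected; afterwards, maintain the invariant that after each of the cop's moves the cop sits at $p_{\phi(r)}$, where $r$ is the current robber position. By the Lipschitz property, when the robber moves from $r$ to $r'$ the cop can shift by at most one step along $P$ to reach $p_{\phi(r')}$, preserving the invariant. By the first property, if the robber ever moves onto a vertex $p_j \in V(P)$, then the cop's next move takes it to $p_{\phi(p_j)} = p_j$ and captures the robber. Hence once the invariant is established, $P$ is guarded.

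The crux of the argument, and the main obstacle, is showing that the cop can establish the invariant in finitely many turns once it is on $P$. I would track the non-negative quantity $\Delta = \phi(r) - c$, where $c$ is the cop's index on $P$ (initially $c = 0$, so $\Delta \ge 0$). Whenever $\Delta > 0$, the cop moves from $p_c$ to $p_{c+1}$; after the robber's response the Lipschitz property gives $\phi(r') \le \phi(r) + 1$, hence the updated gap $\Delta'$ satisfies $\Delta' \le \Delta$, with equality only if $\phi(r)$ strictly increased. Because $\phi$ is capped at $k$, the value $\phi(r)$ can strictly increase for at most $k$ consecutive turns, so $\Delta$ is forced to decrease within every block of $k+1$ turns. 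Therefore $\Delta$ reaches $0$ in finitely many turns, establishing the invariant and hence guarding $P$ from that point onward.
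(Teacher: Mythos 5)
Your shadow-index argument is correct and is essentially the standard Aigner--Fromme proof of this lemma, which the paper cites without reproving: the two properties of $\phi$ (that it fixes $V(P)$ pointwise because $P$ is a shortest path, and that it is $1$-Lipschitz) are exactly what make the ``track the projection of the robber'' strategy work, and your catch-up phase via the capped potential $\Delta$ is a valid way to establish the invariant in finite time (note that once $\Delta=1$ the prescribed forward step already lands the cop on the shadow, so the worry of $\Delta$ skipping past $0$ never materialises). No gaps.
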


Andreae noticed that the proof of \cref{thm:aignerpath} gives us the following stronger result, which we have reformulated for convenience.

\begin{thm}\cite{andreae_pursuit_1986}\label{thm:andreaepath}
	If $G$ is a connected graph, $u,v\in V(G)$, $P$ is a shortest $u-v$ path and $C$ is a cop currently on $u$, then there exists a strategy for $C$ to keep guarding $u$ and, after a finite number of turns, also guard $P$.
\end{thm}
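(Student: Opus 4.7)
The plan is to adapt the classical BFS shadow strategy of Aigner and Fromme from \cref{thm:aignerpath} and verify that its invariants already give continuous guarding of $u$. Label the path as $P = u_0 u_1 \cdots u_k$ with $u_0 = u$ and $u_k = v$; since $P$ is a shortest path, $d(u, u_i) = i$ for each $i$. For each $w \in V(G)$, define the shadow index $h(w) = \min(d(u, w), k) \in \{0, 1, \ldots, k\}$, and note the two standard properties: $|h(w) - h(w')| \leq 1$ whenever $w$ and $w'$ are adjacent, and $h(u_i) = i$ for every $i$. Let $c_t \in \{0, \ldots, k\}$ denote the index of $C$'s position along $P$ after round $t$ and $R_t$ the robber's position. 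The cop's strategy is, on each of its turns, to move one step along $P$ toward $u_{h(R_t)}$, or to stay put if already at that vertex.

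I would first establish the invariant $c_t \leq h(R_t) + 1$, which holds at $t = 0$ since $c_0 = 0$. To propagate it, a short case split on the value of $c_t - h(R_t) \in \{\leq -1, 0, 1\}$ suffices: in each case the cop either moves one step toward the shadow or stays, so $c_{t+1} - h(R_t)$ is at most $0$; since the robber's subsequent move changes $h$ by at most $1$, one obtains $c_{t+1} - h(R_{t+1}) \leq 1$. Continuous guarding of $u$ is then automatic: if the robber moves to $u = u_0$ then $h(R_t) = 0$, so $c_t \leq 1$, meaning $C$ sits at $u_0$ or $u_1$ and will capture on its next move by stepping toward $u_0$.

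Next I would argue that $c_t = h(R_t)$ holds for all sufficiently large $t$, which is the condition needed to guard $P$: if the robber ever enters $P$ at some $u_i$, then $h(R_t) = i = c_t$ and the cop captures. The catch-up step is standard and exploits that $h(R_t) \leq k$ is bounded: the robber cannot indefinitely widen the gap $h(R_t) - c_t$, and in the worst case within $k$ rounds the cop has advanced to $u_k$, after which any further robber motion brings the shadow down to meet the cop.

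The most delicate point, which I expect to require the most attention, is the interplay between the two goals: the cop must be permitted to move forward along $P$ to guard the path, yet must remain close enough to $u$ to intercept a sudden dash by the robber. The invariant $c_t \leq h(R_t) + 1$ is calibrated precisely to straddle this tension, and the transient ``overshoot by one'' state that can arise when the robber moves toward $u$ immediately after the cop has caught up to the shadow must be checked to confirm it does not break guarding of $u$ in the one round before the cop corrects.
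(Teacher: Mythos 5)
Your proposal is correct and is essentially the argument the paper has in mind: the paper gives no separate proof of this statement, citing Andreae's observation that the Aigner--Fromme shadow strategy for \cref{thm:aignerpath}, when the cop starts at $u=u_0$, already satisfies the invariant $c_t\leq h(R_t)+1$ and hence guards $u$ throughout the catch-up phase. Your only imprecision is cosmetic: the stabilized state is best phrased as ``after each cop move, $C$ stands on the shadow of the robber's current position'' rather than $c_t=h(R_t)$ at the end of each round (the robber's reply can always reintroduce a discrepancy of one), but you flag exactly this point and it does not affect correctness.
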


\cref{thm:andreaepath} is indeed stronger than \cref{thm:aignerpath}, since if a cop $C$ has no current strategy, then one can still use $C$ to guard a new path by first sending the cop to $u$ in a finite number of turns and then applying the strategy given by \cref{thm:andreaepath}. The importance of this result is that sometimes a cop is already busy guarding a vertex and it is important that it keeps guarding it whilst it prepares to guard the path.

Andreae's proof of \cref{thm:andreaemain} uses this path guarding strategy repeatedly to incrementally reduce the \emph{robber's territory}, i.e. the vertices the robber can reach without being caught by a cop, while gradually constructing an instance of the forbidden minor as the game is being played. The fact that the minor is forbidden implies that the game will eventually end, with the robber being caught. This is also our approach. With the objective of being as rigorous as possible, we will use the following corollary, which is implicit in \cite{andreae_pursuit_1986} (see, in particular, Section 2). This formulation will clarify the dependencies between the strategies of the cops, since \cref{thm:andreaepath} is usually applied not to $G$ but to a subgraph of $G$ (roughly speaking, to the subgraph induced by the current robber's territory). The strategy we get thus only holds as long as the robber is guaranteed to not leave this subgraph.

\begin{cor}\label{cor:newguardedpath}
    Let $G$ be a connected graph, let $R\subseteq V(G)$ such that $G[R]$ is connected and the robber is in $R$, and let $u\in N[R]$ and $v\in N(R)$ be distinct vertices such that $v$ has at least one neighbour in $R\setminus\{u\}$.
    
    If a cop $C$ is currently on $u$, then there exists a $u-v$ path $P$ of length at least two with all internal vertices of $P$ in $R$ and a strategy for $C$ to keep guarding $u$ and, after a finite number of turns, also guard $P$, under the conditions that the robber never moves to $N(R)\setminus \{u,v\}$, and that the robber does not move to $v$ before $C$ guards $P$.
\end{cor}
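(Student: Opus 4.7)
The plan is to reduce to Andreae's path guarding result (Theorem \ref{thm:andreaepath}) applied to a suitable auxiliary subgraph of $G$ on which the desired shortest path lives. Concretely, I would define
\[
    G' \;=\; G[R \cup \{u,v\}] \;-\; uv,
\]
that is, the subgraph of $G$ induced by $R\cup\{u,v\}$ with the edge $uv$ deleted (this deletion is vacuous if $uv\notin E(G)$). Since $G[R]$ is connected and $v$ has a neighbour in $R\setminus\{u\}$ (hence a neighbour in $R$ joined to $v$ by an edge of $G'$), and $u$ is either already in $R$ or has a neighbour in $R$ (again an edge of $G'$, since $v\notin R$), the graph $G'$ is connected and contains a $u$--$v$ path.

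Next, I would let $P$ be any shortest $u$--$v$ path in $G'$. Because the edge $uv$ has been removed, $P$ has length at least $2$; because $V(G')=R\cup\{u,v\}$, every internal vertex of $P$ lies in $R$. Applying Theorem \ref{thm:andreaepath} to the connected graph $G'$ with the cop $C$ currently at $u$, we obtain a strategy (played on $G'$) for $C$ that keeps guarding $u$ and, after finitely many turns, also guards $P$.

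Finally, I would argue that this strategy, viewed as a strategy in $G$ (valid since $G'$ is a subgraph of $G$), does the job in the original game under the stated assumptions on the robber. The key observation is that, as long as the robber never moves to $N(R)\setminus\{u,v\}$ and does not move to $v$ before $P$ is guarded, the robber's position stays inside $V(G')$, and every edge of $G$ the robber traverses is also an edge of $G'$: the only edge of $G$ with both endpoints in $V(G')$ that is missing from $G'$ is $uv$, and the robber cannot use it, because a move onto $u$ yields immediate capture by $C$ and a move onto $v$ is forbidden during this phase. Hence the robber's trajectory in $G$ is a legal robber trajectory against $C$ in $G'$, so Andreae's strategy guarantees exactly the claimed outcome: $u$ is guarded throughout, and after finitely many turns $P$ is guarded as well.

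The main thing to be careful about is this last transfer step: the guarantees of Theorem \ref{thm:andreaepath} are stated with respect to the game on $G'$, whereas the actual game is on $G$, so the argument has to pin down precisely which robber moves in $G$ correspond to legal robber moves in $G'$ and explain what happens if the robber tries to leave $V(G')$ via the edge $uv$ or otherwise. I expect this bookkeeping -- rather than any combinatorial difficulty -- to be the only real obstacle.
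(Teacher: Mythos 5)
Your proposal is correct and follows essentially the same route as the paper: the same auxiliary graph $G'=G[R\cup\{u,v\}]-uv$, the same choice of a shortest $u$--$v$ path in $G'$, the same application of \cref{thm:andreaepath}, and the same transfer argument ruling out the robber's use of the edge $uv$ or an escape from $R\cup\{u,v\}$. No gaps.
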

\begin{proof}
    Consider the graph $G'=G[R\cup\{u,v\}]-uv$. Let $P$ be a shortest $u-v$ path in $G'$; such a path exists given that $v$ has a least one neighbour in $R$ which is not $u$. By choice of $G'$, the only vertices of $P$ which are possibly not in $R$ are $u,v$, and so all internal vertices of $P$ are in $R$. Given that $uv\notin E(G')$, $P$ must contain at least one vertex other than $u,v$, and so $P$ must have length at least two. 
    
    Apply \cref{thm:andreaepath} to get a strategy for $C$ to guard $u$ and, after a finite number of turns, also guard $P$, if the game is played on $G'$. We claim this strategy is also a valid strategy when playing on $G$. Of course, given that $G'$ is a subgraph of $G$, every move of the cop remains valid. We need to show that the robber cannot, without being caught, make any move on $G$ which it could not have done on $G'$. Suppose to the contrary that we are at the first turn where the robber makes such a move. The first type of illegal move is using the edge $uv$, which implies the robber was on $u$ or $v$ at the previous turn. The first case is impossible, given that $u$ was guarded by $C$, and the latter is impossible because either the robber was supposed to not move to $v$ (if the cop was not yet guarding $P$) or $C$ was guarding $v$ (if the cop was guarding $P$). The other type of illegal move is moving outside $R\cup\{u,v\}$. By hypothesis, the robber can only leave $R$ through either $u$ or $v$, which is impossible by the same argument as above.
\end{proof}

\section{Main result}\label{sec:main}

In this section, we state and prove the main result of this paper, which is an upper bound on the cop number for graphs $G$ with some forbidden minor $H$. By and large, the proof optimizes and greatly extends the techniques used in the proof of \cref{thm:andreaemain}, with very technical modifications. We summarize these key elements following the proof.

The statement of the result requires the two following definitions. 

\begin{definition}\label{def:path-decomp}
Given a graph $H$, we say that the tuple $\mathcal{H} = (h, W, \mathcal{P}, M, f)$ is a \emph{decomposition} of $H$, where
\begin{enumerate}[label=(\alph*)]
    \item $h \in V(H)$,
    \item\label{item:core} $\emptyset\neq W\subseteq V(H-h)$,
    \item\label{item:paths} $\mathcal P$ is a collection of distinct pairwise internally vertex-disjoint paths and (rooted) cycles with end vertices in $W$ such that every edge of $H-h$ is contained in some $P\in\mathcal P$, 
    \item\label{item:defmatching} $M\subseteq \mathcal P$ is a collection of paths of length 1 which forms a matching of $H-h$, and
    \item\label{item:mappingends} $f:W\rightarrow \mathcal P\setminus M$ is such that $u$ is an end of $f(u)$ for every $u\in W$. 
\end{enumerate}
\end{definition}

\cref{subfig:decomposition} gives an example of a decomposition.

Conditions \ref{item:core} and \ref{item:paths} can be seen as stating that the graph $H-h$ is a subdivision of a multigraph (with loops authorized) on the vertex set $W$. Note that in condition \ref{item:defmatching} $M$ does not need to be a perfect matching, and may be empty. Given that a path or (rooted) cycle $P\in \mathcal P$ may only have at most 2 vertices, condition \ref{item:mappingends} implies that $|f^{-1}(P)|\in\{0,1,2\}$.

Intuitively, a decomposition of $H$ is, after choosing some vertex $h\in H$, a way of representing $H-h$ around a ``core'' set of vertices $W$, between which there are paths (those in $\mathcal P$). We will use this decomposition as the blueprint when we attempt to construct $H$ as a minor of the graph $G$ on which the game is played.

To further motivate this definition, we broadly outline the idea behind the proof. We will progressively construct a minor of $H$ inside $G$, using the properties of the game to show that we can add every vertex and edge of $H$ to our partial minor. The robber's territory, that is the region in which the robber will be confined to, will correspond (will be contracted to) to the vertex $h$. The cops' territory, that is the region guarded by the cops, will consist of bags (denoted $A_w$, for every $w\in W$) and paths between these bags (denoted $Q_P$, for $P\in \mathcal P$), with the property that if $P$ is a path in $H$ between $w_1$ and $w_2$, then $Q_P$ will be a path in $G$ between $A_{w_1}$ and $A_{w_2}$. If we can ensure that every $A_w$ is non-empty (and has a neighbour in the robber's territory) and that every $P\in \mathcal P$ has a corresponding (and sufficiently long) path $Q_P$ in $G$, we will have obtained a minor of $H$ in $G$. Broadly speaking, the paths $Q_P$ in the cop territory will completely contain every vertex which is outside of but adjacent to the robber's territory, and a cop will always guard every such vertex, ensuring that the robber is confined to its territory. Indeed, for every $P\in \mathcal P$, a group of cops $\mathcal C_P$ will be assigned to guard the path $Q_P$. If, for example, a path $P$ between $w_1$ and $w_2$ does not yet have a corresponding path $Q_P$ in our model, one of those cops will, using the results of the previous section, start protecting a path path between $A_{w_1}$ and $A_{w_2}$ going through the robber's territory, which will thus reduce it. In some specific cases, we will be able to add a path to the model without requiring any cops to protect it: these are edges of $M$. The ends of this path will be guarded by cops assigned to other paths of the model; this is why we require $M$ to be a matching. Furthermore, as we have noted above, we want every bag $A_w$ to have a least one neighbour in the robber's territory. In our proof, we will in fact be able to guarantee that only one vertex of $A_w$ will be adjacent to the robber's territory. As this vertex only needs to be guarded by one cop, the role of the function $f$ is to indicate the group of cops (the group assigned to $Q_{f(w)}$) which will be responsible for guarding this vertex. We will formally define this partially constructed minor in the context of the game as a \emph{state}, an example of which is shown in \cref{fig:state} below.
 
We may then define the following parameter for each path of $\mathcal P$. It will always be approximately be the length of the path, but takes into account these technicalities: we only need to know the length of the part of the path for which the corresponding vertices in $G$ needs to be guarded by cops.

\begin{definition}\label{def:length-param}
    Given a decomposition $\mathcal H=(h,W,\mathcal P,M,f)$ of a graph $H$, we define for each path $P \in \mathcal{P}$ the following parameter:
    	$$\ell_P=\begin{cases}
		0&P\in M\\
		\max(|E(P)|-1+|f^{-1}(P)|,1)&P\notin M.
	\end{cases}$$
\end{definition}

We are now ready to state our main result.

\begin{thm}\label{thm:main}
    If $\mathcal{H}$ is a decomposition of a graph $H$ and $G$ is a connected $H$-minor-free graph, then 
    $$c(G)\leq \mathds{1}_{\ell}+\sum_{P\in \mathcal P}\left\lceil\frac{\ell_P}{3}\right\rceil,$$
    where the indicator function $\mathds{1}_{\ell}$ is equal to $1$ if and only if there is some $P \in \mathcal{P}$ with $\ell_P \notin \{0,1,2,4\}$.
\end{thm}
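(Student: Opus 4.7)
The plan is to generalize the iterative argument in Andreae's proof of \cref{thm:andreaemain}, with the cops' strategy organized around the decomposition $\mathcal{H}=(h,W,\mathcal P,M,f)$ rather than around individual edges of $H-h$. I would maintain throughout the game the invariant that (i) the robber is confined to a connected territory $R\subseteq V(G)$ that shrinks in time, (ii) for each path $P\in\mathcal P$ already processed there is a guarded realization of $P$ as a subgraph of $V(G)\setminus R$ with endpoints at designated branch vertices, and (iii) for each $w\in W$ a branch vertex $b_w\in N[R]$ has been chosen, consistently across the paths of $\mathcal P$ containing $w$ as an endpoint. Once every path of $\mathcal P$ has been realized without capture, taking these realizations together with the contraction of $R$ to serve as the branch set for $h$ would exhibit $H$ as a minor of $G$, contradicting the hypothesis; hence the robber must eventually be captured.

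To process a path $P\in\mathcal P$ I would allocate $\lceil \ell_P/3\rceil$ cops to it. Each cop enters the robber territory via an application of \cref{cor:newguardedpath}, extending the partial realization of $P$ by a guarded subpath; successive cops stitch these subpaths together into a full realization. The factor $1/3$ arises from a careful organization of the handoffs, in which a single cop is responsible for a stretch of up to three consecutive vertices of the realization, so that the number of cops needed for $P$ is roughly $|E(P)|/3$. The contribution of $|f^{-1}(P)|$ to $\ell_P$ accounts for the fact that the cop guarding an end of $P$ must also serve as the branch vertex $b_w$ for every $w$ with $f(w)=P$, which inflates the number of handoffs near the endpoints. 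The paths of $M$ cost no cops at all: each such edge is realized as the final edge of some other cop's guarded path, and the matching condition on $M$ ensures no double-booking of vertices. The auxiliary cop counted by $\mathds{1}_{\ell}$ plays the role of a ``floating'' helper used for transient tasks such as temporarily covering a vertex while another cop repositions during a handoff; a single helper suffices across the entire argument, and is unnecessary precisely when $\ell_P\in\{0,1,2,4\}$ for every $P$, since these are the values for which a cop schedule exists that performs all its handoffs internally.

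The principal obstacle will be the bookkeeping required to ensure that (i) the realizations of different paths of $\mathcal P$ in $G$ remain internally disjoint — this is where the non-intertwined property from \cref{sec:notation} enters, preventing one path's realization from cutting through another's; (ii) branch vertices shared between multiple paths are realized consistently at a single vertex $b_w$; and (iii) the robber's territory stays connected and adjacent to all branch vertices throughout. A detailed case analysis on $\ell_P \bmod 3$, and on whether $|f^{-1}(P)|$ equals $0$, $1$, or $2$, appears necessary to nail down the exceptional set $\{0,1,2,4\}$ for the helper cop indicator, and to verify that the handoffs between cops along each path can be scheduled without loss of the guarded structure already built.
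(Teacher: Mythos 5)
Your high-level plan follows the same route as the paper: build a model of $H-h$ piece by piece using guarded paths through the robber territory, with $\lceil\ell_P/3\rceil$ cops assigned to each $P\in\mathcal P$, and contract the remaining territory to play the role of $h$. But as written the proposal defers exactly the steps where the difficulty lies, and two of them are genuine gaps rather than routine bookkeeping. First, your invariants (ii) and (iii) cannot be maintained monotonically. As $R$ shrinks, a branch set for $w\in W$ can lose all its neighbours in $R$, and the interior of an already-realized path can lose all its neighbours in $R$; at that point the partial model must be repaired (by having the branch set absorb an initial segment of an incident realized path, or by discarding a realization entirely), and these repairs do not shrink $R$. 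So ``the territory shrinks in time'' is not a valid progress measure, and ``process the paths one after another'' is not a well-defined schedule. The paper replaces this with a well-founded lexicographic order on game states (territory, then total number of guarded vertices, then number of initialized pieces of the model, then total bag size) and derives the contradiction from a minimal state. Some substitute termination argument is needed; without one the proof does not close.

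Second, the mechanism behind the factor $3$ is not a handoff schedule in which each cop covers three consecutive vertices of the realization: the realization of $P$ in $G$ may be arbitrarily long, and what matters is only the number of its vertices adjacent to $R$ (everything else is contracted away). The actual argument is a potential argument: if the cops assigned to $P$ collectively guard fewer than $\ell_P$ neighbours of $R$, then some cop guards at most two of them, and one can reroute the realization through $R$ between two consecutively guarded vertices, strictly shrinking the territory; hence in a terminal state each group guards at least $\ell_P$ neighbours of $R$, which is exactly what the minor-extraction step needs after discounting $|f^{-1}(P)|$ for the endpoints shared with the bags. This also pins down the exceptional set: when $\ell_P\in\{0,1,2,4\}$ one has $2\lceil\ell_P/3\rceil\geq\ell_P$, so a deficient group always contains a cop guarding at most one vertex, and such a cop can reposition alone by sitting on that vertex and extending a guarded path from it via \cref{cor:newguardedpath}; a cop guarding exactly two separated vertices cannot abandon either without help, which is why the floating helper is needed precisely when some $\ell_P\notin\{0,1,2,4\}$. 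Your justification of the exceptional set (``a cop schedule exists that performs all its handoffs internally'') is a placeholder for this arithmetic and does not establish it.
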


In this theorem, we do not impose any conditions on which decomposition is picked. As can be seen in \cref{def:path-decomp}, a graph $H$ may have multiple possible decompositions. When using this theorem, the best bound will be obtained by choosing an optimal decomposition of $H$, roughly speaking by choosing a decomposition that yields the smallest possible sum of $\ell_P$. Note that the minor relation is transitive, and so if $H$ is a minor of $H'$, then if a graph is $H$-minor-free, it is also $H'$-minor-free. Hence, one might obtain a better upper bound by applying \cref{thm:main} to $H'$ instead of $H$. We make such an application of \cref{thm:main} in the proof of \cref{cor:k2t}. This is also useful when $H$ does not have any decomposition, for instance if $H-h$ contains an isolated vertex. Further discussion on applications of this result is provided in \cref{sec:applications}.  \\

The structure of the proof is as follows: 
\begin{itemize}
    \item Set up terminology surrounding both $G$ and the $\mathcal{H}$ decomposition, using precisely the number of cops given by the upper bound.
    \item Define a game state to detail the particular relationship between $G$ and the forbidden minor $H$ through its decomposition. This state relates paths in $G$ to paths in $H$, using the terminology of \emph{initialized} to indicate that the feature will later be used to build the minor. Furthermore, we will say that the cops are \emph{active} when they have been assigned a particular strategy and are actively guarding vertices of $G$. 
    \item Define a partial order on the game states. The robber will be captured when the robber's territory decreases to zero, which can be achieved by taking smaller and smaller game states.
    \item Assume that we are in some minimal game state, where the robber's territory is non-zero for the sake of contradiction. That is, we assume that our current bound is insufficient for the cops to win.
    \item Explore the game state to show that certain features of the decomposition must be present in $G$, by assuming their absence and finding a smaller game state, thus contradicting the minimality of the game state.
    \item Finally, we will show that with all features present, we must in fact have an $H$ minor of $G$, which is forbidden, contradicting the assumption that the game is a minimal game state with non-empty robber's territory.
\end{itemize}
We now prove \cref{thm:main}.

\begin{proof}
    Let $G$ be a connected $H$-minor-free graph, and $\mathcal{H} = (h, W, \mathcal{P}, M, f)$ be a decomposition of $H$. We will play the game of cops and robbers with $\mathds{1}_{\ell}+\sum_{P\in \mathcal P}\left\lceil\frac{\ell_P}{3}\right\rceil$ cops. Let $\mathcal{C}$ be the set of all cops.
 
    For each $P\in \mathcal P$, we define $\mathcal C_P$ to be a set of $\left\lceil\frac{\ell_P}{3}\right\rceil$ cops, such that for all pairs of distinct paths $P_1, P_2 \in \mathcal{P}$, $\mathcal C_{P_1} \cap \mathcal C_{P_2} = \emptyset$. Since $\ell_P = 0$ if and only if $P \in M$, it follows that $\mathcal C_P=\emptyset$ if and only if $P\in M$. In particular, for every vertex $w\in W$, $\mathcal C_{f(w)}\neq\emptyset$, since the function $f$ maps only to non-matching paths in $\mathcal{P}$.  If the indicator function $\mathds{1}_{\ell}=1$ (if some $P\in \mathcal P$ is such that $\ell_P\notin\{0,1,2,4\}$), we then define $C_\ell$ be an additional, distinct cop. Note that the set $\mathcal{C}_P$ to which each cop belongs, may change throughout the proof as the cops ``switch roles'' but they will always do so in a way that leaves the sizes of these sets unchanged.

    In order to show that the cops have a winning strategy, we next need to define the concept of a game state. We will then define a partial order between game states. We will call the process of going from one game state to another smaller game state a \emph{transition}.

    \begin{definition*}
        We say the game is in \emph{state} $(\mathcal A,\mathcal Q,R,s)$ if all of the following hold.
	    \begin{enumerate}[label=(\arabic*)]
	       \item\label{item:defbags} $\mathcal A=(A_w)_{w\in W}$ is a collection of pairwise disjoint subsets of $V(G)$ (which we will call \emph{bags}) such that for every $w\in W$, $G[A_w]$ is either connected or contains no vertices. We say $w$ is \emph{initialized} if $A_w\neq \emptyset$.
            \item\label{item:defpaths} $\mathcal Q=(Q_P)_{P\in \mathcal P}$ is a collection of pairwise internally vertex-disjoint paths such that if $P$ has end vertices $u,v$, then either $Q_P$ is empty or $Q_P$ has end vertices respectively in $A_u$ and $A_v$, and such that internal vertices are not in any of the sets in $\mathcal A$. If $P$ is not a path but a rooted cycle, the end vertices of $Q_P$ are allowed (but not obliged) to be the same (i.e. $Q_P$ is allowed to be a rooted cycle). We say $P\in \mathcal P$ is \emph{initialized} if $Q_P$ is not empty.
            \item\label{item:matching} If $u,v\in W$ are initialized and $uv\in M$, then $uv$ is initialized.
            \item\label{item:Rdef} $R$ is set of vertices of the connected component of $G-\left(\bigcup_{w\in W}A_w\cup\bigcup_{P\in \mathcal P}V(Q_P)\right)$ containing the robber.
            \item\label{item:max1} For each $w \in W$, $A_w$ contains at most one vertex adjacent to $R$.
            \item\label{item:funcs} $s:\mathcal C\rightarrow 2^{V(G)\setminus R}$ is a function such that $C\in \mathcal C$ is following a strategy to guard the vertices in $s(C)$ which is irrespective of the strategy of the other cops, but holds only as long as the robber does not leave $R$ by moving to a vertex in $N(R)\setminus s(C)$. A cop is said to be \emph{active} if $s(C)\neq \emptyset$. Inactive cops $C$ may follow any strategy.
            \item\label{item:coboundary} Every vertex in the coboundary of $R$ is in one of the images of $s$, i.e. $N(R)\subseteq \bigcup_{C\in \mathcal C}s(C)$.
            \item\label{item:nonintertwined} If $P\in \mathcal P$ is initialized, then $s(\mathcal C_P)$ is a non-intertwined family of subsets of $V(Q_P)$.
            \item\label{item:noninitsubset} If $P\in \mathcal P$ is uninitialized, with end vertices $u,v$,  and $C\in \mathcal C_P$, then $s(C)$ is either empty or contains a unique vertex, in $A_u$ or $A_v$.
            \item\label{item:bagguard} If $A_w$ contains a vertex $x$ adjacent to $R$, then $x\in s(C)$ for some $C\in \mathcal C_{f(w)}$.
            \item\label{item:extracop} The extra cop $C_\ell$  is inactive, whenever it exists (ie. when $\mathds 1_\ell=1$).
	   \end{enumerate}
 
        Furthermore, we will use the notations $|\mathcal A|=|\{w\in \mathcal W:A_w\neq \emptyset\}|$ and $|\mathcal Q|=|\{P\in \mathcal P:Q_P\neq \emptyset\}|$ to denote respectively the number of initialized vertices of $W$ and the number of initialized paths of $P$.  When helpful, we will call the graph $G\left[\bigcup_{w\in W}A_w\cup \bigcup_{P\in \mathcal P}V(Q_P)\right]$ the \emph{model} since it is a partial construction of a graph which could be contracted into $H-h$.
    \end{definition*}

    A visualization of an example of a game state is provided in \cref{fig:state}.

    \begin{figure}
        \begin{subfigure}[t]{\textwidth}
         \centering
         \begin{tikzpicture}[scale=1.8,
			dot/.style = {circle, fill, minimum size=#1,
			inner sep=0pt, outer sep=0pt},
			dot/.default = 4pt]

            \node[dot] (h) at (0,-0.05) [label={[scale=1] left : $h$}] {};
            \node[dot] (a) at (-1,1) [label={[scale=1] left: $a$}] {};
            \node[dot] (b) at (0,1) [label={[scale=1] below left: $b$}] {};
            \node[dot] (c) at (1,1) [label={[scale=1] right: $c$}] {};
            \node[dot] (d) at (-0.5,2) [label={[scale=1] above: $d$}] {};
            \node[dot] (e) at (0.5,2) [label={[scale=1] above: $e$}] {};

            \node (P1) at (-0.48,0.62) [label={[scale=1] above: $P_1$}] {};
            \node (P2) at (0.48,0.62) [label={[scale=1] above: $P_2$}] {};
            \node (P3) at (-1,1.27) [label={[scale=1] above: $P_3$}] {};
            \node (P4) at (-0.4,1.25) [label={[scale=1] above: $P_4$}] {};
            \node (P5) at (0.4,1.25) [label={[scale=1] above: $P_5$}] {};
            \node (P6) at (0.96,1.27) [label={[scale=1] above: $P_6$}] {};
            \node (P7) at (0,1.92) [label={[scale=1] above: $P_7$}] {};

            \node (W) at (2,1.85) [label={[scale=1] right: $W=\{a,b,c,d,e\}$}] {};
            \node (P) at (2,1.4) [label={[scale=1] right: $\mathcal P =\{P_1,P_2,P_3,P_4,P_5,P_6,P_7\}$}] {};
            \node (M) at (2,0.95) [label={[scale=1] right: $M=\{P_4,P_6\}$}] {};

            \node (f1) at (2,0.5) [label={[scale=1] right: $f(a)=P_3, f(b)=P_1, f(c)=P_2$}] {};

            \node (f2) at (2,0.2) [label={[scale=1] right: $f(d)=P_3, f(e)=P_5$}] {};

            \node[dot] (i1) at (-0.9,1.2) {};
            \node[dot] (i2) at (-0.8,1.4) {};
            \node[dot] (i3) at (-0.7,1.6) {};
            \node[dot] (i4) at (-0.6,1.8) {};

            \draw (h) to (a);
            \draw (h) to (b);
            \draw (h) to (c);
            \draw (a) to (b);
            \draw (a) to (i1);
            \draw (i1) to (i2);
            \draw (i2) to (i3);
            \draw (i3) to (i4);
            \draw (i4) to (d);
            \draw (b) to (c);
            \draw (b) to (d);
            \draw (b) to (e);
            \draw (c) to (e);
            \draw (d) to (e);

		\end{tikzpicture}
        \caption{Decomposition $(h,W,\mathcal P,M,f)$ of a graph $H$.}
        \label{subfig:decomposition}
        \end{subfigure}
        \par\bigskip
        \begin{subfigure}[t]{\textwidth}
         \centering
         \usetikzlibrary{decorations.pathmorphing}
         \begin{tikzpicture}[scale=0.55,
			dot/.style = {circle, fill, minimum size=#1,
			inner sep=0pt, outer sep=0pt},
			dot/.default = 4pt]

            \draw  (0,-2) circle[radius=4.5] node {};
            \draw   (-8,8) circle[radius=3] node {};
            \draw   (0,8) circle[radius=3] node {};
            \draw   (8,8) circle[radius=3] node {};
            \draw   (-4,17) circle[radius=3] node {};

            \node [dot] (robber) at (-1.5,-3.3) [label={[scale=1] right: Robber}] {};

            \node (R) at (3.5,-5.5) [label={[scale=1] right: $R$}] {};
            \node (Aa) at (-8,5) [label={[scale=1] below: $A_a$}] {};
            \node (Ab) at (0,5) [label={[scale=1] below: $A_b$}] {};
            \node (Ac) at (8,5) [label={[scale=1] below: $A_c$}] {};
            \node (Ad) at (-2,19.5) [label={[scale=1] right: $A_d$}] {};

            \node (Q3) at (-8,13) [label={[scale=1] right: $Q_{P_3}$}] {};
            \node (Q4) at (-0.4,12.7) [label={[scale=1] right: $Q_{P_4}$}] {};
            \node (Q2) at (2.85,10.5) [label={[scale=1] right: $Q_{P_2}$}] {};

            \draw[decorate, decoration={snake,  amplitude=2pt, segment length=20pt}]           (-7,10.2)  -- (-5,14.7);
            \draw[decorate, decoration={snake,  amplitude=2pt, segment length=20pt}]           (2,9.5)  -- (6,9.5);
            \draw (-2.5,15)  -- (2,9.5);

            \node [dot] at (2,9.5) {};
            \node [dot] at (-5,14.7) {};
            \node [dot] at (-5.42,14) {};
            \node [dot] at (-5.9,13) {};
            \node [dot] at (-6,12.165) {};
            \node [dot] at (-6.6,10.93) {};
            \node [dot] at (4,9.5) {};
            \node [dot] at (5,9.63) {};
            \node [dot] at (6,9.5) {};

            \draw (2,9.5)  -- (0.5,0.7);
            \draw (-5,14.7)  -- (-0,0.7);
            \draw (-5.42,14)  -- (-0.5,0.7);
            \draw (-5.9,13)  -- (-1,0.7);
            \draw (-6,12.165)  -- (-1.5,0.7);
            \draw (-6.6,10.93)  -- (-2,0.7);
            \draw (4,9.5)  -- (1,0.7);
            \draw (5,9.63)  -- (1.5,0.7);
            \draw (6,9.5)  -- (2,0.7);

            \draw[rounded corners=5pt, color=blue] (2.7,10) rectangle (6.5,9) {};
            \draw[rounded corners=5pt, color=blue] (1.5,9) rectangle (2.5,10) {};
            \draw[rounded corners=5pt,rotate=-25, color=blue] (-11.2, 6) rectangle (-10.2, 8.8) {};
            \draw[rounded corners=5pt,rotate=-25, color=blue] (-11.2, 9) rectangle (-10.2, 11.5) {};

            \node (Aa) at (-9.5,12.5) [label={[scale=1, color=blue] below: $s(C_1)$, $C_1\in \mathcal C_{Q_{P_3}}$}] {};
            \node (Ab) at (-8.5,15) [label={[scale=1,color=blue] below: $s(C_2)$, $C_2\in \mathcal C_{Q_{P_3}}$}] {};
            \node (Ac) at (0,9.5) [label={[scale=1,color=blue] below: $s(C_3)$, $C_3\in \mathcal C_{Q_{P_1}}$}] {};
            \node (Ad) at (5.5,8.5) [label={[scale=1,color=blue] right: $s(C_4)$, $C_4\in \mathcal C_{Q_{P_2}}$}] {};

		\end{tikzpicture}
        \caption{State $(\mathcal A,\mathcal Q,R,s)$. We notice that $e$ is the only uninitialized vertex of $W$, and that the uninitialized paths are $P_1$, $P_5$, $P_6$ and $P_7$. As $b$ and $d$ are initialized, it was obligatory for $P_4$ to be initialized. Further notice that despite $P_1$ not being initialized, one of the cops of $\mathcal C_{Q_{P_1}}$ is active, and is protecting a vertex in $A_b$ by sitting on it. Finally, note that every vertex adjacent to $R$ that is also in one of the bags $A_w$ is protected by a cop in $\mathcal C_{Q_{f(w)}}$. As $P_4\in M$, there no cops assigned to protect it, so it cannot contain in its interior any vertices adjacent to $R$.}
        \end{subfigure}

        \caption{Example of a decomposition of a graph $H$ and of a state of a game played on an $H$-minor-free graph $G$.}
        \label{fig:state}
    \end{figure}
     
    Note that condition \ref{item:extracop} does not state that the extra cop is never used. It simply implies that when the game in a specific state, it not used. This cop will however be used when transitioning from one state to another, as we will see below.

	Let us also note that once the game is in a state, it may remain in this state as long as the cops' strategies do not change. Indeed, the robber is in $R$ by \ref{item:Rdef}, and cannot leave $R$ due to \ref{item:coboundary}, as long as the cops maintain their current strategies, which is possible, as specified in \ref{item:funcs}, as long as the robber does not leave $R$. In general, note that every cop $C\in \mathcal C$ may change its actual strategy, as long as the vertices it guards are the same, since the strategies of the other cops do not depend on it.
	
    \begin{definition*}
	   We can define a partial order on states by setting $(\mathcal A',\mathcal Q',R',s')<(\mathcal A,\mathcal Q,R,s)$ if 
	   \begin{enumerate}[label=(\roman*)]
	       \item\label{item:reduction1} $R'\subsetneq R$ (the robber's territory is decreased),
	       \item\label{item:reduction2} $R'=R$ and $\sum_{C\in \mathcal C}|s(C)|>\sum_{C\in \mathcal C}|s'(C)|$ (the number of guarded vertices, with multiplicity, decreases),
	       \item\label{item:reduction3} $R'=R$, $\sum_{C\in \mathcal C}|s(C)|=\sum_{C\in \mathcal C}|s'(C)|$ and $|\mathcal A'|+|\mathcal Q'|<|\mathcal A|+|\mathcal Q|$ (the number of pieces of the model decreases), or
	       \item\label{item:reduction4} $R'=R$, $\sum_{C\in \mathcal C}|s(C)|=\sum_{C\in \mathcal C}|s'(C)|$, $|\mathcal A'|+|\mathcal Q'|=|\mathcal A|+|\mathcal Q|$ and $\sum_{w\in W}|A_w'|>\sum_{w\in W}|A_w|$ (the total size of the bags increases).
	   \end{enumerate}

    \end{definition*}
 
	It is easy to see that this defines a well-founded relation on the set of states, in particular, using that these parameters have a finite number of possible values. 

    If the cops change strategies changes to bring the game from one state to a smaller state, we will say the \emph{type of the transition} is the condition (either \ref{item:reduction1}, \ref{item:reduction2}, \ref{item:reduction3} or \ref{item:reduction4}) in the definition of the partial order by virtue of which the new state is smaller.
	
	For brevity, in general when defining a new smaller state $(\mathcal A',\mathcal Q',R',s')$, we will only define the values $A_w'$ ($w\in W$), $Q_P'$ ($P\in \mathcal P$) and $s'(C)$ ($C\in \mathcal C$) which are different from $(\mathcal A,\mathcal Q,R,s)$ (in particular, all cops except those mentioned will maintain their current strategies). In general, we will also not explicitly define $R'$ as it will always be the component of $G-\bigcup_{w\in W}A_w'\cup \bigcup_{P\in \mathcal P}V(Q_P')$ containing the robber. In all instances, we will indeed have $R'\subseteq R$ since only vertices not adjacent to $R$ will ever be removed from the model.

    With the technical definitions completed, we now proceed with proving that the cops have a winning strategy. Suppose for the sake of contradiction that the robber has a strategy to escape any strategy employed by these cops.
    
    In order to find a contradiction, we first place the cops arbitrarily on $G$ and assign them no strategy. Then, \[\left((\emptyset)_{w\in W},(\emptyset)_{P\in \mathcal P},V(G),s(\cdot)=\emptyset\right)\] is a valid state since all of the conditions hold trivially. Then, the cops will follow a strategy to minimize the game state. Given the defined partial order, it must be the case that after some finite amount of time the game is in some minimal state $(\mathcal A,\mathcal Q,R,s)$.

    To avoid repetition, we first explain more precisely why, with the strategies we will use, \ref{item:funcs} continues to hold for every cop $C$ during a transition from $(\mathcal A,\mathcal Q,R,s)$ to $(\mathcal A',\mathcal Q',R',s')$. There are essentially two kinds of strategy changes we will use, which we summarize here:\\

    \textit{Case I:} The first kind to consider is that, as noted above, the cop $C$ maintains its current strategy (and it has maintained it during the transition between the game states) and that $s'(C)\subseteq s(C)$ (of course, we will always choose $s'(C)$ in a way that $s(C)\setminus s'(C)$ will only contain vertices which are also guarded by other cops, so \ref{item:coboundary} will still hold). To show that \ref{item:funcs} still holds, we suppose that we know that the robber is in $R'$ but never leaves $R'$ by moving to a vertex in $N(R')\setminus s'(C)\supseteq N(R')\setminus s(C)$, and we must show that $C$ guards $s'(C)$. We however know that $C$ guards $s(C)\supseteq s'(C)$ as long as the robber does not leave $R$ by moving to a vertex in $N(R)\setminus s(C)$, so it suffices to verify this last condition. Recall that $R'\subseteq R$. The robber cannot leave $R$ directly from $R'$ since it would have to be through a vertex in $N(R')\setminus s(C)$, which is forbidden. The robber also cannot leave $R$ by first going to a vertex in $R \setminus R'$, as this would be forbidden since $R\cap N(R')\subseteq N(R')\setminus s(C)$ since $s(C)$ contains no vertex of $R$. \medskip

	\textit{Case II:} The other kind uses \cref{cor:newguardedpath}: we get a new $x-y$ path (or $x$-rooted cycle if $x=y$) $Q$ with internal vertices in $R$ such that, after a finite of moves, a cop $C$ will be guarding $Q$, under the condition that the robber does not leave $R$ by going on a vertex of $N(R)\setminus \{x,y\}$. When using this argument, generally $V(Q)$ will be the new part of the model and we will set $s'(C)=V(Q)$, although it will be clear in the proof when this is not the case. To show that \ref{item:funcs} holds, we need to prove that if the robber does not leave $R'$ by moving to a vertex in $N(R')\setminus s'(C)$, the new strategy that $C$ is following still works. Note that by the definition of $R'$ in \ref{item:Rdef}, $N(R')\subseteq N(R)\cup V(Q)$. If the robber were to leave $R$ by going to a vertex in $N(R)\setminus \{x,y\}$, it could be directly from $R'$ if this vertex is in $N(R')\setminus V(Q)$, which is forbidden. Otherwise, the robber would need to go first through $V(Q)$ to reach vertices in $R\setminus R'$ which is impossible given that $V(Q)$ is guarded by $C$. \\
 
	We will now prove a series of claims about the minimal state $(\mathcal A,\mathcal Q,R,s)$ the game is currently in. Most of the proofs of these claims will be by contradiction, showing that if the claim does not hold, then the cops can, after a finite number of turns, bring the game into a smaller state.

	
	\begin{claim}\label{claim:sadjacent}
	    For every $C\in \mathcal C$, $s(C)$ contains only vertices adjacent to $R$.
	\end{claim}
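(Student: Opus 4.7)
The plan is to argue by contradiction using the minimality of the state $(\mathcal A,\mathcal Q,R,s)$: if some cop guarded a vertex not adjacent to $R$, we would be able to simply drop that vertex from its guarded set, producing a strictly smaller state under criterion \ref{item:reduction2}.

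Concretely, suppose for contradiction that there exist $C\in \mathcal C$ and $v\in s(C)$ with $v\notin N(R)$. Define $s':\mathcal C\rightarrow 2^{V(G)\setminus R}$ by $s'(C)=s(C)\setminus\{v\}$ and $s'(C')=s(C')$ for every other cop $C'$. Leave $\mathcal A$ and $\mathcal Q$ unchanged; this automatically leaves $R$ unchanged as well. I would then verify that $(\mathcal A,\mathcal Q,R,s')$ is a valid state. The cop $C$ keeps its present strategy, so this is a Type~I strategy change and \ref{item:funcs} is preserved: since $s'(C)\subseteq s(C)$ and $R'=R$, the argument in the preamble applies verbatim. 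Conditions \ref{item:defbags}--\ref{item:Rdef}, \ref{item:max1} and \ref{item:extracop} depend only on $(\mathcal A,\mathcal Q,R)$ and are therefore unaffected. Condition \ref{item:coboundary} still holds because the only vertex removed from the union of guarded sets is $v\notin N(R)$. Being non-intertwined is a property of triples of vertices, so deleting a single vertex from one member of $s(\mathcal C_P)$ cannot break \ref{item:nonintertwined}. Similarly, \ref{item:noninitsubset} is stated as ``empty or a single vertex in $A_u\cup A_v$'' and is preserved under removing the unique vertex if it was present.

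The one subtlety is \ref{item:bagguard}. For each initialized $w\in W$ whose bag $A_w$ contains a vertex $x$ adjacent to $R$, we need some cop in $\mathcal C_{f(w)}$ to still guard $x$. But such an $x$ lies in $N(R)$, while the only vertex we removed from any $s(C)$ is $v\notin N(R)$; hence $x\neq v$ and the cop previously guarding $x$ still does so under $s'$. Thus \ref{item:bagguard} survives.

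The new state therefore satisfies all conditions, and by construction $R'=R$ and $\sum_{C\in \mathcal C}|s'(C)|=\sum_{C\in \mathcal C}|s(C)|-1<\sum_{C\in \mathcal C}|s(C)|$, so $(\mathcal A,\mathcal Q,R,s')<(\mathcal A,\mathcal Q,R,s)$ by criterion \ref{item:reduction2}, contradicting minimality. I expect the main (and essentially only) point requiring care is the verification of \ref{item:bagguard}, which is handled precisely by the hypothesis that the removed vertex lies outside $N(R)$.
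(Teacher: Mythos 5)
Your proof is correct and takes essentially the same approach as the paper: both drop the vertex not adjacent to $R$ from $s(C)$ and invoke a type~\ref{item:reduction2} transition to contradict minimality. Your version just spells out the verification of the state conditions in more detail than the paper does.
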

	\begin{proof}
	    If for some $C\in \mathcal C$, there exists $x\in s(C)$ such that $x$ has no neighbour in $R$, then $C$ does not need to explicitly guard $x$, as the robber cannot reach that vertex (see \ref{item:coboundary}). Let $s'(C)=s(C)\setminus\{x\}$. Then, $(\mathcal A,\mathcal Q,R,s')$ is a new valid state, with transition of type \ref{item:reduction2}, which is a contradiction to the minimality of the current game state.
	\end{proof}
	
	\begin{claim}\label{claim:sdisjoint}
	    $s$ has disjoint images, i.e. if $C_1,C_2\in \mathcal C$ are distinct cops, then $s(C_1)\cap s(C_2)=\emptyset$.
	\end{claim}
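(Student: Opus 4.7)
The plan is to argue by contradiction: suppose $x \in s(C_1) \cap s(C_2)$ for distinct cops $C_1, C_2$, and build a strictly smaller valid state by removing $x$ from one of the two guarding sets. By \cref{claim:sadjacent} and conditions \ref{item:coboundary} and \ref{item:Rdef}, $x$ belongs to the model, i.e.\ $x$ lies in some bag $A_w$ or in $V(Q_P)$ for one or two paths $P\in \mathcal{P}$. The candidate modification is to set $s'(C') = s(C') \setminus \{x\}$ for a judiciously chosen $C' \in \{C_1, C_2\}$, and keep all other values of $s$ unchanged. All cops retain their existing strategies (a type I modification, since $s'(C') \subsetneq s(C')$ so the guarantee for $C'$ carries over), and the modification produces a transition of type \ref{item:reduction2} because $\sum_C |s'(C)| = \sum_C |s(C)| - 1$.

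The bulk of the work is verifying that $(\mathcal A, \mathcal Q, R, s')$ still satisfies every game-state axiom. Conditions \ref{item:defbags}--\ref{item:Rdef}, \ref{item:max1}, and \ref{item:extracop} are untouched. Condition \ref{item:coboundary} holds since $x$ remains guarded by the cop in $\{C_1, C_2\} \setminus \{C'\}$, and every other vertex of $N(R)$ is unaffected. Conditions \ref{item:nonintertwined} and \ref{item:noninitsubset} are preserved because shrinking the single subset $s(C')$ can neither create an intertwining within the family $s(\mathcal{C}_P)$ nor push any uninitialized-path guarded set above one vertex. Condition \ref{item:funcs} persists since $C'$ is not altering its strategy, merely being credited with a smaller guarded set.

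The only real obstacle is \ref{item:bagguard}: if $x \in A_w$, then some cop in $\mathcal{C}_{f(w)}$ must still guard $x$ under $s'$. If $x$ lies in no bag, $C'$ can be chosen arbitrarily. Otherwise let $w$ be the unique element of $W$ with $x \in A_w$ (unique by \ref{item:defbags} and \ref{item:max1}). If both $C_1, C_2 \in \mathcal{C}_{f(w)}$, take $C' = C_2$; then $C_1 \in \mathcal{C}_{f(w)}$ still witnesses \ref{item:bagguard}. If exactly one of $C_1, C_2$ is in $\mathcal{C}_{f(w)}$, take $C'$ to be the other one. If neither lies in $\mathcal{C}_{f(w)}$, then \ref{item:bagguard} applied to the original state forces some third cop $C_3 \in \mathcal{C}_{f(w)}$ with $x \in s(C_3)$, so either choice of $C'$ works. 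In every case $(\mathcal A, \mathcal Q, R, s')$ is a valid state strictly smaller than $(\mathcal A, \mathcal Q, R, s)$, contradicting minimality; no surprises beyond this case analysis are anticipated.
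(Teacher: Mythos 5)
Your proposal is correct and follows essentially the same approach as the paper: delete the redundant occurrence of $x$ from one cop's guarded set, choosing the surviving guard so that condition \ref{item:bagguard} is preserved (the paper likewise keeps $x$ only in $s'(C)$ for some $C\in\mathcal C_{f(w)}$ when $x$ lies in a bag, and chooses arbitrarily when $x$ is interior to some $Q_P$), yielding a type~\ref{item:reduction2} transition that contradicts minimality. The only cosmetic difference is that the paper strips $x$ from all but one guard at once while you remove it from a single cop, which changes nothing in the argument.
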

	\begin{proof}
	    If $x\in A_w$ for some $w\in W$,  is in multiple elements of $s(\mathcal C)$, condition \ref{item:bagguard} implies that at least one of the cops guarding $x$ is some $C\in \mathcal C_{f(w)}$. Defining $s'$ such that $x$ is only contained in $s'(C)$, and otherwise identically to $s$, yields a new state $(\mathcal A,\mathcal Q,R,s')$ with transition, similarly to the previous claim, of type \ref{item:reduction2}.
	    
	    If a vertex $x$ in the interior of $Q_P$ (for some $P\in \mathcal P$) is in multiple elements of $s(\mathcal C)$, choose arbitrarily which of these cops will keep guarding $x$ and proceed as previously. Note that \ref{item:nonintertwined} is maintained, the only difference is that the sets of $s(\mathcal C_P)$ can no longer overlap.
	    
	    In both of these situations, we get a contradiction to the minimality of the current game state.
	\end{proof}
	
	\begin{claim}\label{claim:Pneighbours}
	    If $P\in \mathcal P\setminus M$ is initialized, either the interior of $Q_P$ contains a neighbour of $R$, or $P = f(u)=f(v)$ for distinct $u,v \in W$ so that both ends of $Q_P$, say $x \in A_u$ and $y \in A_v$ have neighbours in $R$. 
	\end{claim}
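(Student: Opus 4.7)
The approach is by contradiction, uninitializing $Q_P$ to reach a strictly smaller game state. Write $x \in A_u$ and $y \in A_v$ for the ends of $Q_P$, where $u, v \in W$ are the ends of $P$ in the decomposition. By \ref{item:nonintertwined} and \cref{claim:sadjacent}, $s(C) \subseteq V(Q_P) \cap N(R)$ for every $C \in \mathcal C_P$, and under the negation of the first alternative (no internal vertex of $Q_P$ is adjacent to $R$) this sharpens to $s(C) \subseteq \{x, y\}$.

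Next would come a short case analysis driven by the negation of the second alternative. If at most one of $x, y$ lies in $N(R)$, then each $s(C)$ for $C\in\mathcal C_P$ is already contained in a single vertex of $A_u \cup A_v$ (or empty). Otherwise both endpoints lie in $N(R)$, and the negation forces one of: $u = v$, $P \neq f(u)$, or $P \neq f(v)$. In the first case, \ref{item:max1} applied to $A_u = A_v$ forces $x = y$, collapsing $\{x, y\}$ to a single vertex. In the second, \ref{item:bagguard} places the guard of $x$ in $\mathcal C_{f(u)}$, which is disjoint from $\mathcal C_P$, so by \cref{claim:sdisjoint} no cop in $\mathcal C_P$ guards $x$; hence $s(C) \subseteq \{y\}$ for every such $C$. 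The third alternative is symmetric.

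Then I would define the new state by setting $Q_P' = \emptyset$ and leaving every other $Q_{P'}$, every $A_w$, and every cop's strategy unchanged. Because no internal vertex of $Q_P$ is adjacent to $R$ and its endpoints stay inside the bags, the robber's connected component is unchanged, giving $R' = R$. Every state condition carries over almost verbatim; the one that requires the preceding analysis is \ref{item:noninitsubset}, which asks precisely that each $s(C)$ for $C \in \mathcal C_P$ be empty or a single vertex of $A_u \cup A_v$, which is exactly what the case analysis establishes. Since $|\mathcal Q'| = |\mathcal Q| - 1$ while the robber territory and total guarded weight are unchanged, this is a type \ref{item:reduction3} transition, contradicting minimality.

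The main obstacle is the subcase where both ends lie in $N(R)$: each of the three alternatives in the negation of the second case requires its own argument for why no cop in $\mathcal C_P$ is forced to guard two distinct vertices, and the $u = v$ subcase in particular leans on the slightly subtle use of \ref{item:max1} to collapse $\{x, y\}$ to a single vertex inside $A_u$.
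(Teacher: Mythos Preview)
Your proposal is correct and follows essentially the same approach as the paper's own proof: negate the statement, use \cref{claim:sadjacent} and \ref{item:nonintertwined} to confine $s(\mathcal C_P)$ to $\{x,y\}$, invoke \ref{item:bagguard} together with \cref{claim:sdisjoint} to rule out one of the endpoints, then uninitialize $P$ for a type~\ref{item:reduction3} transition. Your case analysis is in fact slightly more explicit than the paper's (which compresses the $u=v$ subcase into a parenthetical ``unless $x=y$''), but the argument is the same.
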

	\begin{proof}
	    Suppose there exists $P\in \mathcal P\setminus M$ for which the statement does not hold. Let $x\in A_u,y\in A_v$ be the end vertices of $Q_P$ (note that it is possible that $x=y$ if $P$ is a cycle).
	    
	    By \cref{claim:sadjacent}, only vertices adjacent to $R$ appear in the elements of $s(\mathcal C_P)$, and by \ref{item:nonintertwined} only vertices of $Q_P$ can appear in $s(\mathcal C_P)$. In particular, given that $Q_P$ contains no internal vertex adjacent to $R$, only $x,y$ can appear in $s(\mathcal C_P)$. Note that by \ref{item:bagguard} and \cref{claim:sdisjoint}, the ends $x,y$ of $Q_P$ may only appear in $s(\mathcal C_P)$ if respectively $P=f(u)$, $P=f(v)$. Since we are assuming that the claim does not hold for $P$, this holds for at most one of $x,y$ (unless $x=y$). Hence, either $s(C)=\emptyset$ for all $C\in\mathcal C_P$, or only one cop of $\mathcal C_P$ is active and only guards one vertex in $A_u$ (without loss of generality). These two cases will correspond to the two possible cases in \ref{item:noninitsubset} of the new smaller state, which we now define. Set $Q_P'=\emptyset$. Note that $R$ is still a component of $G-\left(\bigcup_{w\in W}A_w\cup\bigcup_{P\in\mathcal P}V(Q_P')\right)$, given that no neighbour of $R$ was in the interior of the removed path. It is then easy to verify that $(\mathcal A,\mathcal Q',R,s)$ is a new valid state with transition of type \ref{item:reduction3}, which is a contradiction to the minimality of the current game state.
	\end{proof}
	
	\begin{claim}\label{claim:copsitting}
	    If $C\in \mathcal C$ is such that $s(C)=\{x\}$, we may assume that $C$ is guarding $x$ by sitting on it.
	\end{claim}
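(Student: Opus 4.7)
My plan is to treat this as a bookkeeping convenience: since the state tuple $(\mathcal A,\mathcal Q,R,s)$ depends only on the bags, model paths, robber territory, and the abstract guarding assignment, but not on the physical positions of the individual cops, simply relocating $C$ to $x$ does not alter the state (none of the four parameters in the partial order change). So ``we may assume $C$ sits on $x$'' reduces to showing that the cop can be brought to $x$ in finite time while preserving the current state.

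If $C$ is already on $x$, there is nothing to do: the strategy ``do not move'' clearly guards $\{x\}$, since any robber step onto $x$ is an immediate capture, and this is compatible with every condition in the state definition. Otherwise, since $G$ is connected, there is a walk of finite length from $C$'s current position to $x$. I would have $C$ follow this walk, stepping toward $x$ on turns where the old guarding strategy for $\{x\}$ does not already dictate a move; the old strategy, by hypothesis, already ensures the robber cannot slip to $x$, so it continues to protect $x$ for the finitely many turns required. Once $C$ arrives at $x$, I replace its strategy with the trivial ``remain at $x$,'' which is clearly a valid strategy for guarding $\{x\}$.

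The one delicate point is reconciling the relocation with $C$'s continuing duty to guard $x$, so let me say explicitly how to do it: follow the old strategy whenever it prescribes a move, and otherwise take one step along the chosen walk toward $x$. Because the old strategy was a valid guarding strategy for $\{x\}$ and depends only on the robber's moves (which remain well-defined: the robber is confined to $R$ by \ref{item:coboundary} and the strategies of the other cops are unchanged), it stays valid throughout the transition. At no point does $(\mathcal A,\mathcal Q,R,s)$ change, so the minimality of the current state is not violated, and after finitely many turns we are legitimately in a state where $C$ guards $x$ by sitting on it. The main (minor) obstacle is just this compatibility argument; there is no deep combinatorial content.
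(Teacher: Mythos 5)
There is a genuine gap in your relocation mechanism. You propose to interleave the old guarding strategy with steps toward $x$: ``follow the old strategy whenever it prescribes a move, and otherwise take one step along the chosen walk toward $x$.'' Two things go wrong. First, a guarding strategy may prescribe a move on every turn (for instance, the shortest-path strategy behind \cref{thm:aignerpath} has the cop continually shadow a projection of the robber), so the cop may never get an ``idle'' turn and never makes progress toward $x$. Second, and more seriously, guarding strategies are position-dependent: their correctness rests on an invariant tying the cop's current vertex to the robber's. The moment the cop takes even one off-strategy step toward $x$, that invariant can break, and the claim that ``the old strategy stays valid throughout the transition'' is exactly what fails --- the robber may now be able to slip onto $x$ before the cop can recover. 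You cannot simultaneously deviate from a strategy and invoke its guarantee.

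The paper's proof avoids this by separating the two roles. It first extracts a purely metric consequence of the \emph{existence} of a valid strategy guarding $x$: at the cops' turn, $d_G(C,x)$ is at most $d_{G[R\cup\{x\}]}(\text{robber},x)+1$, since otherwise the robber could run a shortest path to $x$ unimpeded. Then the cop \emph{abandons} the old strategy entirely and walks a shortest path in $G$ to $x$; the distance inequality is preserved turn by turn, so the cop either reaches $x$ first or captures the robber there. This is the missing idea in your write-up: you need a quantitative reason the robber cannot win the race to $x$ during the finitely many turns of relocation, not a continued appeal to the old strategy while simultaneously departing from it. Your observation that the state tuple is unchanged by the relocation (so minimality is not an issue) is correct and matches the paper.
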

	\begin{proof}
	    Suppose $C\in \mathcal C$ is such that $s(C)=\{x\}$. Of course, there are possibly many strategies that $C$ could be using to block the robber from moving to $x$. For instance, $C$ could be using some larger path guarding strategy such as in \cref{cor:newguardedpath}, or sitting on a neighbour of $x$ until the robber enters $x$. 
        
        Given that $C$ is guarding $x$ if the robber only moves in $G[R\cup \{x\}]$, the distance (at the cops' turn) between $x$ and $C$ in $G$ cannot be more than one larger than the distance between $x$ and the robber in $G[R\cup\{x\}]$. Indeed, otherwise the robber could follow a shortest path to $x$ and not be caught by $C$ on the way.
	    
	    Let $C$ abandon its current strategy and move towards $x$ via the shortest path in $G$ until the cop reaches $x$, after which it will sit on $x$ to guard it. We claim that once this is done, the state of the game will be unchanged; it suffices to show that the robber could not have escaped $R$ through $x$, given that all of other cops may follow their strategies as long as the robber not leave $R$, as specified in \ref{item:funcs}. Given the distances between $x$ and $C$ and the robber discussed above, the cop will either arrive at $x$ before the robber does or capture the robber on $x$.
	    
	    Given there are a finite number of cops and this strategy takes at most $\diam(G)\leq |V(G)|$ turns, we can apply the above strategy for every cop if needed. Hence, from now on, if a cop is only guarding one vertex, we may suppose it is sitting on that vertex.
	    
	    Note that once the change of strategy is complete, the state of the game is unchanged.
	\end{proof}
	
    \begin{claim}\label{claim:initializedvertex}
        If $w\in W$ is initialized, then $A_w$ contains a vertex adjacent to $R$.
    \end{claim}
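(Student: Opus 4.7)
The plan is a proof by contradiction. Suppose $w$ is initialized but $A_w\cap N(R)=\emptyset$; I will construct a state strictly smaller than $(\mathcal A,\mathcal Q,R,s)$, contradicting its minimality. Note first that by Claim~\ref{claim:sadjacent} no cop currently guards any vertex of $A_w$, since guarded vertices must lie in $N(R)$. I split into cases according to which paths in $\mathcal P$ with $w$ as an endpoint are initialized.

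If no such path is initialized, set $A_w'=\emptyset$: because no vertex of $A_w$ is in $N(R)$ and no $Q_P$ ends in $A_w$, deleting $A_w$ from the model does not enlarge the robber's component, so $R'=R$; no cop strategy needs changing; and conditions \ref{item:defbags}--\ref{item:extracop} are readily verified (in particular \ref{item:matching} becomes vacuous at $w$). This is a type~\ref{item:reduction3} transition ($|\mathcal A|$ drops by one). If only matching paths at $w$ are initialized, uninitialize $w$ together with every such $P_0\in M$; since $\mathcal C_{P_0}=\emptyset$, conditions \ref{item:coboundary} and \ref{item:nonintertwined} force the interior of each such $Q_{P_0}$ to avoid $N(R)$ (there is no cop available to guard it), so removing these interiors leaves $R$ intact, again giving a type~\ref{item:reduction3} transition.

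In the remaining case, some $P_0\notin M$ with $w$ as an endpoint is initialized; let $x\in A_w$ be the endpoint of $Q_{P_0}$ in $A_w$, so $x\notin N(R)$. Claim~\ref{claim:Pneighbours} forces the interior of $Q_{P_0}$ to contain a vertex of $N(R)$ (the alternative requires both endpoints of $Q_{P_0}$ to be in $N(R)$, which fails since $x\notin N(R)$), so $Q_{P_0}$ has length at least $2$ and possesses an interior vertex $v_1$ adjacent to $x$. I absorb $v_1$ into the bag: set $A_w'=A_w\cup\{v_1\}$ (still connected because $v_1\sim x$) and redefine $Q_{P_0}'$ to start at $v_1$ instead of $x$. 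The model covers the same vertex set, so $R'=R$; condition \ref{item:max1} persists since $v_1$ is the only candidate for $A_w'\cap N(R)$; $|\mathcal A|+|\mathcal Q|$ is unchanged; and $\sum_w|A_w|$ grows by one, yielding a type~\ref{item:reduction4} transition.

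The main obstacle is verifying \ref{item:bagguard} when $v_1\in N(R)$: the new vertex of $A_w'\cap N(R)$ must be guarded by some cop in $\mathcal C_{f(w)}$. When $f(w)$ is itself initialized, I choose $P_0=f(w)$, and the cop in $\mathcal C_{P_0}=\mathcal C_{f(w)}$ that already guards $v_1$ (which it must, since $v_1\in N(R)\cap V(Q_{P_0})$ and $s(\mathcal C_{P_0})\subseteq V(Q_{P_0})$ must cover it by \ref{item:coboundary}) fulfills the requirement. When $f(w)$ is uninitialized, condition \ref{item:noninitsubset} together with $A_w\cap N(R)=\emptyset$ forces every cop in $\mathcal C_{f(w)}$ to be either inactive or to guard a single vertex of $A_{w'}$ (the other endpoint of $f(w)$); using Claim~\ref{claim:copsitting} I can swap such a cop into $\mathcal C_{P_0}$ while moving the cop formerly guarding $v_1$ into $\mathcal C_{f(w)}$ to sit on $v_1$, preserving each $|\mathcal C_P|$ and the total guard count. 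Carefully checking that this reassignment preserves \ref{item:nonintertwined} for $\mathcal C_{P_0}$ and \ref{item:noninitsubset} for $\mathcal C_{f(w)}$ in every sub-configuration is the delicate part.
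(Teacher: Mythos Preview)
Your argument is sound when no non-matching path at $w$ is initialized, and when $f(w)$ itself is initialized (taking $P_0=f(w)$). The gap is in the remaining subcase: $f(w)$ uninitialized, $v_1\in N(R)$, and you try to repair \ref{item:bagguard} by swapping cops between $\mathcal C_{f(w)}$ and $\mathcal C_{P_0}$.

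This swap does not yield a valid state. Let $C'\in\mathcal C_{P_0}$ be the cop with $v_1\in s(C')$. Nothing forces $|s(C')|=1$; condition \ref{item:nonintertwined} only says $s(C')\subseteq V(Q_{P_0})$. If you have $C'$ sit on $v_1$ with $s'(C')=\{v_1\}$, the vertices of $s(C')\setminus\{v_1\}\subseteq N(R)$ are no longer in any image of $s'$, violating \ref{item:coboundary}; the cop $C$ you push from $\mathcal C_{f(w)}$ into $\mathcal C_{P_0}$ cannot take them over, since it is either inactive or is physically sitting on some $a\in A_u$ far from $Q_{P_0}$. Worse, when $C$ was active on $a\in A_u\cap N(R)$, \ref{item:bagguard} and Claim~\ref{claim:sdisjoint} force $f(u)=f(w)$; after the swap no cop of the new $\mathcal C_{f(u)}=\mathcal C_{f(w)}$ guards $a$, so \ref{item:bagguard} now fails at $u$. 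And keeping $s(C)=\{a\}$ while declaring $C\in\mathcal C_{P_0}$ violates \ref{item:nonintertwined} unless $a$ happens to be an end of $Q_{P_0}'$. Your closing sentence acknowledges this check is ``delicate'', but in fact it fails.

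The paper does not attempt to stay within a type~\ref{item:reduction4} transition here. When all cops of $\mathcal C_{f(w)}$ are inactive it sends one of them to sit on $z$ (the first interior vertex of $Q_{P_0}$ adjacent to $R$), with no team relabeling. When one such cop $C$ is active on $a\in A_u$, it applies Corollary~\ref{cor:newguardedpath} to have $C$ guard a new $a$--$z$ path through $R$, absorbs $Q_{P_0}[x,z]$ into $A_w$, and sets $Q_{f(w)}'$ equal to that new path, thereby \emph{initializing} $f(w)$; this is a type~\ref{item:reduction1} transition. The key missing idea in your attempt is that in this subcase one must shrink $R$ rather than merely enlarge a bag.
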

	\begin{proof}
	    Suppose to the contrary that there is some $w\in W$ which is initialized but such that $A_w$ contains no vertex adjacent to $R$. There are a few cases to consider here.
	
	    If there are no initialized paths in $\mathcal P\setminus M$ incident to $w$, then set $A_w'=\emptyset$. If $wv\in M$ for some $v\in W$, let $Q_{wv}'=\emptyset$. The cop assignment $s$ is still well defined. Indeed, $A_w$ contained no vertex adjacent to $R$, and thus by \cref{claim:sadjacent} none of its elements was guarded by a cop, and in the case with $wv$ is in the matching, no internal vertex of $Q_{wv}$ is guarded by a cop given that $\mathcal C_{wv}=\emptyset$. It is easy to see that $(\mathcal A',\mathcal Q',R,s)$ defines a new valid state, this time with transition of type \ref{item:reduction3}. \medskip
	
	    Suppose now $f(w)$ is initialized. Say $f(w)$ has ends $w,v$ and $Q_{f(w)}$ has end vertices $x,y$, with $x\in A_w$ and $y\in A_v$ (note that if $w=v$, it is possible that $x=y$). We know that $x$ is not adjacent to $R$. By \cref{claim:Pneighbours}, $Q_{f(w)}$ must then contain a vertex in its interior which is adjacent to $R$. Take $z$ such a vertex which is as close as possible to $x$ in $Q_{f(w)}$ (when traversing it from $x$ to $y$). Let $A_w'=A_w\cup V(Q_{f(w)}[x,z])$ and $Q_{f(w)}'=Q_{f(w)}[z,y]$ (note that $Q_{f(w)}'$ is necessarily a path, even if $Q_{f(w)}$ was a cycle). As $A_w$ contained no vertex adjacent to $R$ and by our choice of $z$, $A_w'$ still respect \ref{item:max1}. Then, $(\mathcal A',\mathcal Q',R,s)$ will define a new valid state, here the transition being of type \ref{item:reduction4}, since the only change is that $A_w'$ absorbed some vertices of $Q_{f(w)}$. Note that it is important for \ref{item:bagguard} that we absorbed parts of $Q_{f(w)}$ and not of any incident path, so that the vertex of $A_w'$ be guarded by one of the cops of $\mathcal C_{f(w)}$. \medskip
	
	    Hence, we may suppose that $f(w)$ is uninitialized, but there exists $P\in \mathcal P\setminus M$ containing $w$ which is initialized. Suppose $P$ has end vertices $w,v$ and $f(w)$ has end vertices $w,u$. Say $Q_P$ has ends $x,y$ (where $x\in A_w$ and $y\in A_v$). \cref{claim:Pneighbours} again yields that there exists $z$ in the interior of $Q_P$ which has a neighbour in $R$. Again take $z$ as close as possible to $x$ in $Q_P$. Given that all active cops are guarding at least one vertex adjacent to $R$ by \cref{claim:sadjacent} and that $A_w$ contains no neighbour of $R$, either all cops of $\mathcal C_{f(w)}$ are inactive, or one of them is guarding a vertex $a\in A_u$ and the others are inactive (in this second case, necessarily $u\neq w$ and so $f(w)$ is not a cycle). Let $C\in \mathcal C_{f(w)}$ be a cop which, depending on the case above, is either inactive or on $a$. \smallskip
	
	    In the first of these two cases, we first send the inactive cop $C$ to sit on $z$ to guard it. Let $A_w'=A_w\cup V(Q_P[x,z])$, $Q_P'=Q_P[z,y]$, $s'(C)=\{z\}$ and $s'(C')=s(C')\setminus \{z\}$, where $C'\in \mathcal C_{P}$ is the cop which was guarding $z$ previously. Then, the game is now in the new state $(\mathcal A',\mathcal Q',R,s')$; this transition has type \ref{item:reduction4}.\smallskip
	
	    In the other case, using \cref{cor:newguardedpath} (note by \cref{claim:copsitting} that $C$ is sitting on $a$) there exists an $a-z$ path $Q$ with internal vertices in $R$ (of which there are at least one), which can, after a finite number of turns (during which $C$ still guards $a$), be guarded by $C$. Let $A_w'=A_w\cup V(Q_P[x,z])$, $Q_P'=Q_P[z,y]$, $Q_{f(w)}'=Q$, $s'(C)=V(Q)$. The game is now in the new state $(\mathcal A',\mathcal Q',R',s')$; this transition has type \ref{item:reduction1}. Note that in this case we initialized $f(w)$, which was necessary as a (or the) cop of $\mathcal C_{f(w)}$ was already busy guarding one vertex. \smallskip
	    
	    In both of these cases, it is important for \ref{item:bagguard} to hold that one of the cops of $C_{f(w)}$ guards $z$, which is now the unique vertex of $A_w'$ adjacent to $R'$. \medskip
	    
	    In all cases, we can reach a strictly smaller game state, which is a contradiction to the minimality of the current game state.
	\end{proof}
	
	\begin{claim}\label{claim:2neighbours}
	    If $C\in \mathcal C$ is such that $s(C)=\{x\}$, where $x\in A_w$ for some $w\in W$, then $x$ has at least 2 neighbours in $R$.
	\end{claim}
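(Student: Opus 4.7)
The proof is by contradiction, following the pattern of the earlier claims. Suppose for contradiction that $x$ has at most one neighbour in $R$; combined with \cref{claim:sadjacent}, $x$ has exactly one neighbour $y\in R$. By \cref{claim:copsitting} we may assume that $C$ is sitting on $x$. The strategy is to absorb $y$ into the bag $A_w$ and re-task $C$ to guard $y$ instead of $x$.

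Concretely, have $C$ step from $x$ to $y$ (if the robber happens to be on $y$, it is captured and we are done); then define the new state by $A_w'=A_w\cup\{y\}$ and $s'(C)=\{y\}$, leaving every other bag, path and cop strategy unchanged. Connectivity of $G[A_w']$ follows because $y$ is adjacent to $x\in A_w$. Condition \ref{item:max1} holds since $x$ has no neighbour in $R'\subseteq R\setminus\{y\}$, so $y$ is the only vertex of $A_w'$ that can be adjacent to $R'$. For condition \ref{item:coboundary} one verifies $N(R')\subseteq N(R)\cup\{y\}$: a vertex adjacent to $R'$ either lies outside $R$ (and so is already in $N(R)$) or lies in $R\setminus R'$, but any such vertex other than $y$ is in a different component of $G[R\setminus\{y\}]$ from $R'$ and so cannot be adjacent to $R'$. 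Since $y\in s'(C)$ and vertices in $N(R)\cap N(R')$ remain guarded under the standard Type I carryover (using $R'\subseteq R$), the coboundary stays covered, and the robber cannot escape $R'$ into $R\setminus R'$ without passing through the newly guarded $y$.

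The transition is then of type \ref{item:reduction1} because $y\in R\setminus R'$ gives $R'\subsetneq R$, contradicting the minimality of $(\mathcal A,\mathcal Q,R,s)$. The step requiring the most care is the subcase where $f(w)$ is initialized: condition \ref{item:nonintertwined} then requires $s'(C)\subseteq V(Q_{f(w)})$, while we want $s'(C)=\{y\}\not\subseteq V(Q_{f(w)})$. In this subcase one instead prepends the edge $xy$ to $Q_{f(w)}$, so that $y$ becomes the new endpoint in the bag and $x$ is demoted to the interior of the path, setting $A_w'=(A_w\setminus\{x\})\cup\{y\}$; preserving connectivity of $G[A_w']$ may require additionally absorbing path vertices into the bag, which is handled by the same bag-absorption technique used in \cref{claim:initializedvertex}.
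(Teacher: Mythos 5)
Your main argument is the paper's proof, essentially verbatim: $x$ has a unique neighbour $y$ in $R$ (it has at least one by \cref{claim:sadjacent}), $C$ is sitting on $x$ by \cref{claim:copsitting}, so you move $C$ onto $y$, set $A_w'=A_w\cup\{y\}$ and $s'(C)=\{y\}$, and observe that since $y$ was $x$'s only neighbour in $R$, the vertex $x$ is no longer adjacent to $R'$, so \ref{item:max1} and \ref{item:coboundary} survive and the transition is of type \ref{item:reduction1}. All of that is correct and matches the paper.

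The appended subcase is where I have reservations. You are right that the paper is silent about \ref{item:nonintertwined}: if $f(w)$ is initialized then $x$ is the end of $Q_{f(w)}$ lying in $A_w$, and read literally \ref{item:nonintertwined} would require $s'(C)\subseteq V(Q_{f(w)})$, which $\{y\}$ is not — that is a legitimate observation about an imprecision the paper glosses over. But your repair does not work. By \ref{item:max1}, $x$ is the only vertex of $A_w$ adjacent to $R$, so $y$'s only neighbour inside $A_w$ is $x$; hence $G[(A_w\setminus\{x\})\cup\{y\}]$ is disconnected whenever $|A_w|>1$, and absorbing vertices of $Q_{f(w)}$ into the bag cannot reconnect $y$ to $A_w\setminus\{x\}$ except by returning $x$ to the bag, which \ref{item:defpaths} forbids once $x$ is an interior vertex of the path. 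Moreover, other initialized paths incident to $w$ may also have $x$ as their end in $A_w$, so deleting $x$ from the bag strands those ends, again violating \ref{item:defpaths}. So "demoting $x$ to the interior" is not an available move; any correct resolution of the point you raise must keep $x$ in $A_w$ (for instance by arguing that the non-intertwinedness invariant is maintained in the sense in which it is actually needed, with $y$ standing in for the end of $Q_{f(w)}$), rather than restructuring the model around $x$.
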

	\begin{proof}
	    Suppose to the contrary that $x$ has exactly 1 neighbour $a$ in $R$ (by \cref{claim:sadjacent}, $x$ cannot have no neighbours in $R$). By \cref{claim:copsitting}, $C$ is sitting on $x$. Move $C$ to $a$. Let $A_u'=A_u\cup\{a\}$ and $s'(C)=\{a\}$. The game is now in state $(\mathcal A',\mathcal Q,R',s')$, with transition of type \ref{item:reduction1}, which is a contradiction to the minimality of the current game state. Note that given that $x$ only has $a$ as a neighbour in $R$, $x$ is not a neighbour of $R'$, and so \ref{item:max1} and \ref{item:coboundary} are indeed still respected.
	\end{proof}
	
	\begin{claim}\label{claim:winitialized}
	    Every $w\in W$ is initialized.
	\end{claim}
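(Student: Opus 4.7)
The plan is to prove \cref{claim:winitialized} by contradiction: assuming the minimal state has some $w \in W$ with $A_w = \emptyset$, I would exhibit a strictly smaller state. The first observation is that since $w$ is uninitialized, the path $f(w) \in \mathcal{P} \setminus M$ must also be uninitialized, because any end of $Q_{f(w)}$ at $w$ would need to lie in the empty $A_w$. Let $v$ be the other end of $f(w)$. By \ref{item:noninitsubset}, every cop in $\mathcal C_{f(w)}$ is either inactive or guards a single vertex of $A_v$. Combining \ref{item:bagguard} applied at $A_v$ with \cref{claim:sdisjoint}, \cref{claim:sadjacent}, and \ref{item:max1}, the only way a cop of $\mathcal C_{f(w)}$ can actually be active is if $f(v) = f(w)$ and it guards the unique vertex $x \in A_v$ adjacent to $R$; and then at most one such cop can be active.

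In the generic case, at least one cop $C \in \mathcal C_{f(w)}$ is inactive (this includes the degenerate subcase where no vertex of $W$ is initialized, in which case every cop is inactive). I would pick such a $C$ together with any vertex $b \in R$ having at least one other neighbour in $R$ (such $b$ exists whenever $|R| \ge 2$; the case $|R|=1$ yields immediate capture), and move $C$ via a shortest path in $G$ to $b$ while every other cop maintains its current strategy, so that the robber remains confined to $R$. Once $C$ arrives on $b$, I set $A_w = \{b\}$ and $s'(C) = \{b\}$. Because $A_w$ is a singleton, \ref{item:defbags} and \ref{item:max1} are immediate, \ref{item:bagguard} holds as $C \in \mathcal C_{f(w)}$ guards $b$, and \ref{item:coboundary} is preserved since $N(R') \subseteq N(R) \cup \{b\}$. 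Since $R' \subseteq R \setminus \{b\} \subsetneq R$, this is a reduction of type \ref{item:reduction1}, contradicting minimality.

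The main obstacle is the remaining tricky case where $|\mathcal C_{f(w)}| = 1$ and its sole cop $C$ is pinned on $x \in A_v$ with $f(v) = f(w)$. Since $|f^{-1}(f(w))| \ge 2$, a direct calculation using \cref{def:length-param} gives $\ell_{f(w)} \in \{2,3\}$. In both subcases the idea is to initialize $w$ and $f(w)$ simultaneously, using \cref{cor:newguardedpath} applied to the cop currently on $x$ to extend the guard from $\{x\}$ to a path $Q$ of length at least two from $x$ to some $b \in N(R)$ with interior in $R$; one then sets $A_w = \{b\}$, $Q_{f(w)} = Q$, and updates that cop's image to $V(Q)$. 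When $\ell_{f(w)} = 3$ the indicator $\mathds{1}_{\ell} = 1$ makes the extra cop $C_\ell$ available, and one can first swap $C_\ell$ into the role of $C$ to free up the needed capacity; when $\ell_{f(w)} = 2$ the same procedure is carried out directly with $C$. The most delicate point throughout this case is the choice of $b$: by \ref{item:coboundary} every vertex of $N(R)$ already lies in the existing model, so $b$ cannot be completely fresh, and one must pick $b$ carefully (possibly adjusting an affected bag or initialized path) so that the resulting state is valid and still strictly smaller than the current one.
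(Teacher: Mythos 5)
Your generic case is on the right track (and your deduction that an active cop of $\mathcal C_{f(w)}$ forces $f(v)=f(w)$ is correct), but the proposal has two genuine gaps. First, you never address condition \ref{item:matching}. If some initialized $u\in W$ has $wu\in M$, then the moment you set $A_w=\{b\}$ both ends of the matching edge are initialized, so $Q_{wu}$ must be nonempty — yet $\mathcal C_{wu}=\emptyset$, so no cop can ever be assigned to guard it. Placing $A_w$ at an \emph{arbitrary} $b\in R$ therefore breaks the state. The paper's proof splits on exactly this: when such a $u$ exists, it takes $y\in A_u$ adjacent to $R$ (which exists by \cref{claim:initializedvertex}) and chooses the new bag vertex to be a neighbour of $y$ in $R$, so that $Q_{wu}$ can be taken to be the bare edge and needs no guard.

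Second, your ``tricky case'' is not actually resolved. Routing a path from $x$ to some $b\in N(R)$ cannot work for the reason you yourself identify: every vertex of $N(R)$ already belongs to the model, so $\{b\}$ cannot serve as a new disjoint bag, and ``adjusting an affected bag or path'' is precisely the content that is missing. The paper's resolution is to put the new endpoint \emph{inside} $R$ rather than in $N(R)$: when the sole cop $C\in\mathcal C_{f(w)}$ sits on $y\in A_v$, one takes $x\in R$ a neighbour of $y$, sets $A_w'=\{x\}$ and $Q_{f(w)}'=xy$ (a path of length $1$, which a single cop can guard at both ends by sitting on $y$ and stepping to $x$), so no extra cop and no application of \cref{cor:newguardedpath} is needed there. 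The extra cop $C_\ell$ (or a second cop of $\mathcal C_{f(w)}$) is only invoked in the sub-case where additionally $wu\in M$ with $u=v$, where the paper shows $\ell_{f(w)}\ge 3$ (not $\in\{2,3\}$ as you compute — since $wu$ is already a path of $\mathcal P$ and $H$ is simple, $f(w)$ has length at least $2$, so $\ell_{f(w)}=|E(f(w))|-1+2\ge 3$), guaranteeing that either $\mathds 1_\ell=1$ or $|\mathcal C_{f(w)}|\ge 2$; the spare cop then guards an $a$--$y$ path through $R$ where $a$ is a second $R$-neighbour of $y$ supplied by \cref{claim:2neighbours}. Without these two fixes the argument does not close.
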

	\begin{proof}
	    Suppose $w$ is uninitialized. Throughout the proof, let $w,v$ be the ends of $f(w)$. When initializing $w$, recall that by \ref{item:bagguard} we must take care that a cop of $\mathcal C_{f(w)}$ will guard the possible vertex which is adjacent to the robber's territory. We will consider two main cases. \\
	
	    We first consider the case in which $wu\notin M$ for every initialized $u\in W$. Under this assumption, we can initialize $w$ without being concerned with \ref{item:matching} (of course, as long as $w$ is the only vertex we are initializing). There are two subcases to consider. \smallskip
	    
	    The first subcase is if all cops of $\mathcal C_{f(w)}$ are inactive. Let $x\in R$ be arbitrary. Send some $C\in \mathcal C_{f(w)}$ to guard $x$ by sitting on it. Let $A_w'=\{x\}$ and $s'(C)=\{x\}$. It is easy to verify that $(\mathcal A',\mathcal Q,R',s')$ is indeed a new valid state, with transition of type \ref{item:reduction1}. \smallskip
	
	    The second subcase is if not all cops of $\mathcal C_{f(w)}$ are inactive. Given that $A_w=\emptyset$, there can be no paths incident to $A_w$, and so $f(w)$ is necessarily uninitialized. By \ref{item:max1}, \ref{item:noninitsubset}, \cref{claim:sadjacent} and \cref{claim:sdisjoint}, there is only one active cop in $\mathcal C_{f(w)}$, say $C$, which is guarding (and sitting on, by \cref{claim:copsitting}) a vertex $y\in A_v$. Let $x\in R$ be any neighbour of $y$, which must exist by \cref{claim:sadjacent}. It is easy to see that $C$ can guard the path $xy$, for instance by sitting on $y$ and moving to $x$ if the robber goes on $x$. Set $A_w'=\{x\}$, $Q_{f(w)}'=xy$ and $s'(C)=\{x,y\}$. Then $(\mathcal A',\mathcal Q',R',s')$ is a new valid state, with transition of type \ref{item:reduction1}. \medskip
	
	    The other case to consider is if there exists some initialized $u\in W$ such that $wu\in M$. Note that necessarily $w\neq u$ in this case. Given that $M$ is a matching, there is only one such $u$, and so to ensure that \ref{item:matching} is respected after initializing $w$ we only need to consider this $u$. Recall that by definition of $f$, $f(w)\notin M$ and so $f(w)\neq wu$. By \cref{claim:initializedvertex}, there exists $y\in A_u$ such that $y$ has at least one neighbour in $R$. There are once again two main subcases here. \smallskip
	
	    The first subcase now is that all cops of $\mathcal C_{f(w)}$ are inactive, let $C\in \mathcal C_{f(w)}$. Let $x\in R$ be a neighbour of $y$. Send $C$ to guard $x$ by sitting on it. Let $A_w'=\{x\}$, $Q_{wu}'=xy$, and $s'(C)=\{x\}$. Then $(\mathcal A',\mathcal Q',R',s')$ is indeed a new valid state, with transition of type \ref{item:reduction1}. \smallskip
	
	    The other subcase is that there exists $C\in\mathcal C_{f(w)}$ which is active. Given that $A_w$ is uninitialized, $f(w)$ is also uninitialized. As earlier, the cop $C$ must then be guarding (by sitting on) a vertex $x\in A_v$ adjacent to $R$ (and so $w\neq v$), and by \cref{claim:sdisjoint}, any other cop in $\mathcal C_{f(w)}$ is inactive. \medskip
	
	    First suppose $u\neq v$. Apply \cref{cor:newguardedpath} to get a $x-y$ path $Q$ through $R$ of length at least 2, such that $C$ can guard $Q$ after a finite number of turns. Let $z$ be the penultimate vertex of $Q$, i.e. $z$ is the vertex of $Q$ adjacent to $y$. Set $A_w'=\{z\}$, $Q'_{wu}=zy$, $Q'_{f(w)}=Q[x,z]$ and $s(C)=V(Q)\setminus \{y\}$. Then $(\mathcal A',\mathcal Q',R',s')$ is a new valid state, with transition of type \ref{item:reduction1}.  \smallskip
	
	    Now suppose $u=v$. By \ref{item:max1} we have that $x=y$, so we know that $y$ is already guarded by $C$. By \cref{claim:2neighbours}, $y$ has at least two neighbours in $R$, let $a$ be such a such a neighbour. Given that $C\in \mathcal C_{f(w)}$ but is sitting on a vertex of $A_u$, by \ref{item:bagguard} and \cref{claim:sdisjoint} we have that necessarily $f(w)=f(u)$. In particular, $|f^{-1}(f(w))|=2$. Also, given that $wu\in \mathcal P$ already, $f(w)$ cannot be an edge (since $H$ is not a multigraph), and thus has length at least two. Hence, $\ell_{f(w)}\geq 3$. If $\ell_{f(w)}=3$, then $\mathds 1_\ell=1$ and the extra cop $C_\ell$ is present in the game, so let $C'=C_\ell$. If $\ell_{f(w)}\geq 4$, then $|\mathcal C_{f(w)}|\geq 2$ and so let $C'\in \mathcal C_{f(w)}$ which is distinct from $C$ (in particular, $C'$ is inactive). First move $C'$ to $a$. Then, apply \cref{cor:newguardedpath} to get an $a-y$ path $Q$ of length at least two (in particular, not using the edge $ay$) with internal vertices in $R$ which can be guarded by $C'$ after a finite number of turns. Note that is important when applying \cref{cor:newguardedpath} that $C$ keeps guarding $y$ while $C'$ goes to guard $Q$, as otherwise the robber could escape $R$ through $y$. Only once $C'$ is guarding $Q$ can $C$ stop guarding $y$. Let $A_{w}'=\{a\}$, $Q_{wu}'=ay$, $Q_{f(w)}'=Q$, $s'(C')=V(Q)$ and $s'(C)=\emptyset$. If $C'=C_\ell$, we also need to switch the labels of $C$ and $C_\ell$, i.e. $C$ becomes the new extra cop, and $C_\ell$ becomes a cop of $C_{f(w)}$. The game is now in state $(\mathcal A',\mathcal Q',R',s')$, with transition of type \ref{item:reduction1}. \medskip
	    
	    Note that in all of these subcases, the reason no cop is required in $\mathcal C_{wu}$ is because $Q_{wu}'$ is only an edge, hence contains no internal vertex adjacent to the robber's territory. The ends of this path, if they are adjacent to the robber's territory, are protected by the cops designated by $f$ given \ref{item:bagguard}.
	
	    In all cases, we can reach a strictly smaller game state, which is a contradiction to the minimality of the current game state.
	\end{proof}
	
	\begin{claim}\label{claim:Pinitialized}
	    Every $P\in \mathcal P$ is initialized.
	\end{claim}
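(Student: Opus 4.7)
Suppose for contradiction that some $P \in \mathcal{P}$ is uninitialized, with ends $u, v \in W$ (possibly $u = v$ if $P$ is a rooted cycle). By \cref{claim:winitialized}, both $u$ and $v$ are initialized, so condition \ref{item:matching} forces $P \notin M$, and hence $\mathcal{C}_P \neq \emptyset$. Combining \cref{claim:initializedvertex} with \ref{item:max1} gives unique vertices $x \in A_u$ and $y \in A_v$ adjacent to $R$ (coinciding if $u = v$), and by \ref{item:bagguard} these are guarded by cops from $\mathcal{C}_{f(u)}$ and $\mathcal{C}_{f(v)}$ respectively.

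The plan is to initialize $P$ by constructing an $x$-$y$ path (or $x$-rooted cycle if $u = v$) $Q$ through $R$ via \cref{cor:newguardedpath} and assigning a cop $C \in \mathcal{C}_P$ to guard it. To obtain $C$, we split into two cases. If some cop in $\mathcal{C}_P$ is inactive, we take it and first slide it to $x$; during this transit $x$ remains guarded by the cop from $\mathcal{C}_{f(u)}$, so the state is preserved. Otherwise every cop in $\mathcal{C}_P$ is active, but by \ref{item:noninitsubset} together with \cref{claim:sdisjoint} each such cop can only be sitting (using \cref{claim:copsitting}) on one of $x$ or $y$, and only when $P = f(u)$ or $P = f(v)$; we then repurpose the cop already on $x$ (or $y$) as $C$. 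When $u \neq v$, applying \cref{cor:newguardedpath} with endpoints $x, y$ produces a path $Q$ of length at least two with internal vertices in $R$ that $C$ will guard after a finite number of turns. When $u = v$, we instead pick a neighbour $a \in R$ of $x$ and apply the corollary with endpoints $x, a$; concatenating the resulting path with the edge $ax$ yields the desired $x$-rooted cycle of length at least three.

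Set $Q_P' = Q$ and $s'(C) = V(Q)$, deactivating any cop in $\mathcal{C}_P$ that was previously guarding a vertex now absorbed by $V(Q)$. One verifies routinely that all state conditions survive: in particular \ref{item:bagguard} still holds because $x$ and $y$ remain guarded by cops in $\mathcal{C}_{f(u)}$ and $\mathcal{C}_{f(v)}$ (either by $C$ itself when $P = f(u)$ or $P = f(v)$, or else by the previously assigned cops from those sets), and \ref{item:nonintertwined} is trivial since only $C$ is now active in $\mathcal{C}_P$. Because $Q$ has length at least two, its internal vertices (all in $R$) become part of the model, so $R' \subsetneq R$, giving a transition of type \ref{item:reduction1} and contradicting the minimality of the current state. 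The main technical obstacle will be the loop case $u = v$: \cref{cor:newguardedpath} is stated only for distinct endpoints, so one must route through an auxiliary neighbour $a$ of $x$ in $R$ and then close the cycle with $ax$, verifying (using \cref{claim:2neighbours} if necessary) that such an $a$ exists and that the resulting cycle has length at least three.
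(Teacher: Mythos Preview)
Your argument for the path case $u\neq v$ is essentially the paper's argument and is fine. The cycle case $u=v$, however, has two genuine gaps that you have not filled.

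\textbf{First gap: the appeal to \cref{claim:2neighbours} does not always apply.} To close the loop via \cref{cor:newguardedpath}, you need the cop to start at $a\in R$ and reach $v=x\in N(R)$, which requires $x$ to have a neighbour in $R\setminus\{a\}$, i.e.\ at least two neighbours in $R$. You invoke \cref{claim:2neighbours}, but that claim only fires when some cop $C$ has $s(C)=\{x\}$. By \ref{item:bagguard} the cop guarding $x$ lies in $\mathcal C_{f(u)}$; if $P\neq f(u)$ and $f(u)$ is \emph{initialized}, that cop may be guarding several vertices of $Q_{f(u)}$, so $|s(C)|\geq 2$ and \cref{claim:2neighbours} says nothing. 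The paper handles precisely this situation (the subcase ``$x$ has exactly one neighbour $a$ in $R$'') by \emph{not} initializing $P$ at all: instead it reroutes or absorbs part of $Q_{f(u)}$ into $A_u$, and this is only possible because of the crucial opening move you omitted --- choosing $P$, among all uninitialized paths, to lie in the image of $f$ if any such path exists, which guarantees that $f(u)$ is already initialized whenever $P\neq f(u)$.

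\textbf{Second gap: repurposing the cop on $x$ does not work when it is the only one.} In your ``all cops of $\mathcal C_P$ active'' branch with $P=f(u)$, the unique active cop $C\in\mathcal C_P$ is sitting on $x$. To build the cycle you must send a cop to $a\in R$ and apply \cref{cor:newguardedpath} from there; but if $C$ leaves $x$ for $a$, then $x$ is momentarily unguarded and the robber can escape $R$ through it, so the corollary's hypotheses fail. The paper resolves this by bringing in a \emph{second} cop: since a rooted cycle has length at least $3$, one has $\ell_P\geq 3$, and either $|\mathcal C_P|\geq 2$ (take another cop of $\mathcal C_P$) or $\ell_P=3$, in which case $\mathds 1_\ell=1$ and the extra cop $C_\ell$ exists and is used (with a label swap afterwards to restore \ref{item:extracop}). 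Your proposal never touches $C_\ell$, so this case is simply not covered.
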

	\begin{proof}
	    Suppose to the contrary there exists some uninitialized $P\in \mathcal P$. If possible, choose $P$ such that there is $u \in W$ for which $P= f(u)$. By \cref{claim:winitialized}, all vertices in $W$ are initialized. By \ref{item:matching}, $P\notin M$ and so $\mathcal C_P$ is necessarily non-empty.
        
        Suppose $P$ has end vertices $u,v$. There are two main cases to consider: when $u\neq v$ and $u=v$.  \medskip
	
	    First suppose that $u\neq v$. By \cref{claim:initializedvertex}, there exists $x\in A_u$ and $y\in A_v$ adjacent to $R$. As in the previous claims, \ref{item:noninitsubset} implies that any active cop of $\mathcal C_P$ is either sitting on $x$ or $y$, without loss of generality say it is on $x$. If no cop of $\mathcal C_{P}$ is active, first send one inactive cop of $\mathcal C_{P}$ to $x$. In both cases, there is a cop $C\in \mathcal C_{P}$ sitting on $x$. Using \cref{cor:newguardedpath}, there exists at least one $x-y$ path $Q$ with internal vertices in $R$, which can be guarded by $C$ after a finite number of turns (and such that during these turns, $x$ remains guarded by $C$). Define $Q_P'=Q$ and $s'(C)=V(Q)$. Once $C$ is following this new strategy, the game is now in state $(\mathcal A,\mathcal Q',R',s')$, with transition of type \ref{item:reduction1}. \medskip
	
    	We now consider the case $u=v$, so $P$ is an $u$-rooted cycle. By \cref{claim:initializedvertex}, there exists $x\in A_u$ adjacent to $R$. There are two subcases here based on the number of neighbours of $x$ that are in $R$. \smallskip
	    
	    First suppose $x$ has at least two neighbours in $R$, let one of them be $a\in R$. By \ref{item:bagguard}, we know that $x$ is guarded by a cop $C'\in \mathcal C_{f(u)}$. We first want to find a cop (distinct from $C'$) to guard a new path used to initialize $P$. If $P\neq f(u)$, all cops of $\mathcal C_P$ are necessarily inactive by \ref{item:noninitsubset} and \cref{claim:sdisjoint} given that $x$ is already guarded by $C'$, so let $C\in \mathcal C_P$. Suppose now that $P=f(u)$. Recall that we want to find an inactive cop distinct from $C'$. Given that a cycle has length at least 3 we have $\ell_P\geq 3$. If $\ell_P=3$, then $\mathds 1_\ell=1$ and the extra cop $C_\ell$ is present in the game, so let $C=C_\ell$. If $\ell_P\geq 4$, $|\mathcal C_P|\geq 2$ and so let $C\in \mathcal C_P$ which is distinct from $C'$. In both cases, $C$ is inactive and thus available to take on a new strategy. Move $C$ to $a$, and apply \cref{cor:newguardedpath} to get an $a-x$ path $Q$ of length at least two (in particular, not using the edge $ax$) with internal vertices in $R$. This path can be guarded by $C$ after a finite number of turns. Note that it is important when applying \cref{cor:newguardedpath} that $C'$ keeps guarding $x$ while $C$ prepares to guard $P$ (which is why we wanted $C$ to be distinct from $C'$). Set $Q_P'=xa\oplus Q$ and $s'(C)=V(Q)$. In the case where $C=C_\ell$, also set $s'(C')=\emptyset$. This only happens if, in particular, $C'\in \mathcal C_{f(u)}$, and so $C'$ was necessarily sitting on $x$ by \ref{item:noninitsubset}. In this case, we also need to switch the labels of $C'$ and $C_\ell$, that is $C'$ becomes the new extra cop, and $C_\ell$ becomes a cop of $C_{P}$. The game is now in state $(\mathcal A,\mathcal Q',R',s')$, with transition of type \ref{item:reduction1}.\medskip
	
	    Second, suppose $x$ has exactly one neighbour $a$ in $R$. If $P=f(u)$, then by \ref{item:bagguard}, there necessarily exists $C\in \mathcal C_P$ which is guarding $x$. By \ref{item:noninitsubset}, $s(C)=\{x\}$. This contradicts \cref{claim:2neighbours}, so $P\neq f(u)$. In particular, by the same argument in the previous case, no cop of $\mathcal C_P$ is active, so we let $C\in \mathcal C_P$. By our initial choice of $P$, $f(u)$ must be initialized. Let $w$ be the other end of $f(u)$, and let $y$ be the vertex of $A_w$ adjacent to $R$, which exists by \cref{claim:initializedvertex}. We next consider whether the interior of $Q_{f(u)}$ contains vertices adjacent to $R$, and separate this into two subsubcases. \smallskip
	
	    First suppose the interior of $Q_{f(u)}$ contains no vertex adjacent to $R$. Note that by \cref{claim:Pneighbours}, $f(u)$ is not a cycle, so $u\neq w$. Apply \cref{cor:newguardedpath} to get an $x-y$ path $Q$ which goes through $R$ which can be guarded by $C$ after a finite number of turns. The cops of $\mathcal C_{f(u)}$ guarding (the ends of) $Q_{f(u)}$ may now be relieved. Let $Q_{f(u)}'=Q$, $s'(C)=V(Q)$ and $s'(C')=\emptyset$ for every $C'\in \mathcal C_{f(w)}$. Note however that this would no longer respect \ref{item:nonintertwined}, given that $C$ is in $\mathcal C_P$ but is guarding $Q_{f(u)}'$. Hence, switch the roles of $C$ and of one cop $C'\in \mathcal C_{f(w)}$, that is we redefine $\mathcal C_{f(w)}=(\mathcal C_{f(w)}\setminus\{C'\})\cup\{C\}$ and $\mathcal C_{P}=(\mathcal C_{P}\setminus\{C\})\cup\{C'\}$). Then, the game is now in state $(\mathcal A,\mathcal Q',R',s')$, with transition of type \ref{item:reduction1}. \smallskip
	
	    Second, suppose now the interior of $Q_{f(u)}$ contains a vertex adjacent to $R$ (note that here it is possible that $u=w$). Choose $z$ to be such a vertex as close to $x$ as possible, i.e. the interior of $Q_{f(u)}[x,z]$ contains no vertex adjacent to $R$. Apply \cref{cor:newguardedpath} to get an $x-z$ path $Q$ which goes through $R$ which can be guarded by $C$ after a finite number of turns. Define $A_u'=A_u\cup V(Q_{f(u)}[x,z])$,  $Q_{f(u)}'=Q_{f(u)}[z,y]$ and $Q_P'=Q$. Given that $x,z\in A_u'$, $Q_P'$ is indeed an $A_u'-A_u'$ path. Let $s(C)=V(Q)$ and let $s'(C')=s(C')\setminus\{x\}$ for the cop $C'\in \mathcal C_{f(u)}$ which was previously guarding $x$, in order for \ref{item:nonintertwined} to still hold. The game is now in state $(\mathcal A',\mathcal Q',R',s')$, with transition of type \ref{item:reduction1}. Note that given that $x$ only had $a$ as a neighbour in $R$, $a$ is necessarily in $Q$, and so $z$ is the only vertex of $A_u'$ which is potentially adjacent to $R'$, hence \ref{item:max1} still holds.\medskip
	    
	    In all cases, we can reach a strictly smaller game state, which is a contradiction to the minimality of the current game state.
	\end{proof}
	
	\begin{claim}\label{claim:ellPneighbours}
	    For every $P\in \mathcal P$, $\sum_{C\in \mathcal C_P} |s(C)|\geq \ell_P$.
	\end{claim}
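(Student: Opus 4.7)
\medskip

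\textbf{Proof plan.} I would argue by contradiction, producing a strictly smaller game state and contradicting the minimality of $(\mathcal A,\mathcal Q,R,s)$. The case $P\in M$ is immediate since then $\ell_P=0$ and the sum is trivially non-negative, so I would assume $P\in \mathcal P\setminus M$, in which case $\ell_P\geq 1$ and, by \cref{claim:Pinitialized}, $P$ is initialized and $Q_P$ is non-empty. By \cref{claim:sdisjoint} the sum equals the cardinality $|s(\mathcal C_P)|$, and by condition \ref{item:nonintertwined} this set is contained in $V(Q_P)$.

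My first preliminary observation would be that every interior vertex of $Q_P$ that is adjacent to $R$ necessarily belongs to $s(\mathcal C_P)$. Indeed, by \ref{item:defpaths}, interior vertices of $Q_P$ lie in no bag $A_w$ and in no other $Q_{P'}$; combined with \ref{item:nonintertwined}, \ref{item:noninitsubset}, \ref{item:bagguard} and \ref{item:extracop}, the only cops whose guarded set can meet the interior of $Q_P$ are the cops of $\mathcal C_P$. Condition \ref{item:coboundary} forces each such vertex to be guarded, so it must be guarded by a cop in $\mathcal C_P$.

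Next, since $|\mathcal C_P|=\lceil \ell_P/3\rceil$ and $\sum_{C\in\mathcal C_P}|s(C)|<\ell_P\leq 3\lceil \ell_P/3\rceil$, pigeonhole produces a cop $C\in\mathcal C_P$ with $|s(C)|\leq 2$. I would then mimic the strategy used in \cref{claim:initializedvertex}, \cref{claim:winitialized} and \cref{claim:Pinitialized}: use this underused (or inactive) cop to either push the model outward and shrink $R$, or to slide vertices of $Q_P$ into an adjacent bag. Concretely, by \cref{claim:Pneighbours} the path $Q_P$ either has an interior vertex $z\in N(R)$ (necessarily in $s(\mathcal C_P)$) or satisfies $P=f(u)=f(v)$ with both endpoints in $N(R)$; in either configuration, I would locate an interior segment of $Q_P$ with no neighbour of $R$ and reassign $C$ via \cref{cor:newguardedpath} to guard a new, shorter subpath while absorbing the rest into a bag $A_u$ (or $A_v$). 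This is a transition of type \ref{item:reduction4} or \ref{item:reduction1}, depending on whether $R$ itself shrinks.

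The main obstacle will be the case analysis, very much in the spirit of the preceding claims: the subcases $|s(C)|=0$, $1$, or $2$; whether $P$ is a path or a rooted cycle; the values $|f^{-1}(P)|\in\{0,1,2\}$; and, crucially, whether swapping the role of $C$ with another cop (possibly the extra cop $C_\ell$, when $\mathds 1_\ell=1$) is required in order to preserve \ref{item:nonintertwined} and \ref{item:bagguard}. As in \cref{claim:winitialized} and \cref{claim:Pinitialized}, the $P=f(u)=f(v)$ and $u=v$ subcase is the most delicate, because it is precisely where the adjustment to the formula defining $\ell_P$ (the $+|f^{-1}(P)|$ term) is tight, and the bookkeeping must correctly account for the endpoints of $Q_P$ that are guarded by cops outside $\mathcal C_P$.
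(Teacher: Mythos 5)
Your overall skeleton matches the paper's: argue by contradiction with minimality, pigeonhole an underused cop $C\in\mathcal C_P$ with $|s(C)|\leq 2$, and use \cref{cor:newguardedpath} to re-route $Q_P$ through $R$. But you leave unresolved the one point on which the whole claim (and the definition of $\mathds 1_\ell$ in the theorem) hinges: what to do when the pigeonholed cop has $|s(C)|=2$ and no extra cop exists. A cop guarding two $R$-neighbours $z_1,z_2$ cannot itself start guarding a new $z_1$--$z_2$ path through $R$, since \cref{cor:newguardedpath} only lets it keep guarding one endpoint during the transition; a second, inactive cop is needed, and all other cops of $\mathcal C_P$ may be busy. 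The paper's resolution is arithmetic: $2\left\lceil\ell_P/3\right\rceil\geq\ell_P$ exactly when $\ell_P\in\{0,1,2,4\}$, so under the hypothesis $\sum_{C\in\mathcal C_P}|s(C)|<\ell_P$ one can always \emph{choose} $C$ with $|s(C)|\leq 1$ in that regime, and the case $|s(C)|=2$ is only ever forced when $\ell_P\notin\{0,1,2,4\}$, i.e. precisely when $\mathds 1_\ell=1$ and $C_\ell$ is available (after which the labels of $C$ and $C_\ell$ are swapped to restore \ref{item:nonintertwined} and \ref{item:extracop}). You gesture at "possibly the extra cop $C_\ell$, when $\mathds 1_\ell=1$" but never establish that the problematic subcase cannot occur when $\mathds 1_\ell=0$; without that, the argument stalls.

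A secondary issue is the transition itself. The paper's move is always of type \ref{item:reduction1}: pick two vertices $z_1,z_2$ adjacent to $R$ with no $R$-neighbour of $Q_P$ strictly between them, splice in a new $z_1$--$z_2$ path $Q$ through $R$ guarded by the freed cop, and \emph{drop} the bypassed segment of $Q_P$ (which is safe for \ref{item:coboundary} precisely because it contains no $R$-neighbour). Your "absorb the rest into a bag $A_u$" is the move from \cref{claim:initializedvertex}, and here it is both unnecessary and problematic: since every $A_w$ already has an $R$-neighbour, absorption threatens \ref{item:max1}, and if $P=f(u)$ it breaks \ref{item:nonintertwined} because a cop of $\mathcal C_P$ guarding the old endpoint $x$ would no longer be guarding a subset of $V(Q_P')$. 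You also omit the degenerate subcase where $Q_P$ has a single $R$-neighbour $z_1$ (necessarily interior, by \cref{claim:Pneighbours}), in which the second anchor $z_2$ must be taken from $A_v$ via \cref{claim:initializedvertex} and the tail of $Q_P$ beyond $z_1$ discarded.
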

	\begin{proof}
        Suppose to the contrary that there exists $P$ such that $\sum_{C\in \mathcal C_P} |s(C)|<\ell_P$.Since $|\mathcal C_P|=\left\lceil\frac{\ell_P}{3}\right\rceil$ by definition, there must be some $C \in \mathcal{C}_P$ with $|s(C)| \leq 2$. If possible, choose this $C$ with $|s(C)| \leq 1$.
        
        By \cref{claim:Pinitialized}, $P$ is initialized, so $s(\mathcal{C}_P)$ is a non-intertwined family of subsets of $V(Q_P)$. Let $u,v$ be the end vertices of $P$, and $x,y$ the end vertices of $Q_P$, where $x\in A_u$ and $y\in A_v$. Notice that when, $\ell_P\in \{0,1,2,4\}$, we have $2\left\lceil\frac{\ell_P}{3}\right\rceil\geq \ell_P$, and so there exists $C\in \mathcal C_P$ such that $s(C)\leq 1$; in this case, such a $C$ would have been chosen above. Hence, if $|s(C)|=2$, we know it must be the case that $\ell_P \notin \{0,1,2,4\}$. \medskip

	    If $|s(C)|=1$, let $z_1$ be the vertex $C$ is currently sitting on. If $|s(C)|=0$, let $z_1$ be the vertex of $Q_P$ which is adjacent to $R$ (such a vertex exists by \cref{claim:Pneighbours}) and closest to $x$ (when traversing $Q_P$ from $x$ to $y$), and then send $C$ to $z_1$. \smallskip
	    
	    If $z_1$ is the only vertex of $Q_P$ adjacent to $R$, by \cref{claim:Pneighbours} $z_1$ is necessarily an internal vertex of $Q_P$. In this case, by \cref{claim:initializedvertex}, $A_v$ must contain a vertex $z_2\neq z_1$ which is adjacent to $R$. Otherwise, let $z_2$ be the first vertex adjacent to $R$ which appears after $z_1$ when traversing $Q_P$ from $x$ to $y$. 
	    
	    By \cref{cor:newguardedpath}, there exists a $z_1-z_2$ path $Q$ of length at least two with internal vertices in $R$ such that $C$ has a strategy to keep guarding $z_1$ and, after a finite number of turns, guard $Q$. If $z_2\in Q_P$, let $Q_P'=Q_P[x,z_1]\oplus Q\oplus Q_P[z_2,y]$. Otherwise we chose $z_2\in A_v$, and so let $Q_P'=Q_P[x,z_1]\oplus Q$. Note that in all cases, the parts of $Q_P$ which are being dropped did not contain any neighbour in $R$, so \ref{item:coboundary} still holds, and $Q_P'$ still has ends in $A_u$ and $A_v$. Let $s(C)=V(Q)$. It is direct that this maintains \ref{item:nonintertwined}. The game is now in state $(\mathcal A',\mathcal Q',R',s')$, with transition of type \ref{item:reduction1}. \medskip
	    
	    Suppose now that $|s(C)|=2$. By the choice of $C$ above, $\ell_P\notin \{0,1,2,4\}$. In particular, $\mathds 1_\ell=1$, and so the cop $C_\ell$ exists and is inactive. Let $z_1,z_2$ be the vertices of $s(C)$, suppose without loss of generality that $z_1$ appears before $z_2$ when traversing $Q_P$ from $x$ to $y$. By \ref{item:nonintertwined} and \cref{claim:sadjacent}, $Q_P[z_1,z_2]$ contains no internal vertex adjacent to $R$ (in the very specific case where $Q_P$ is a cycle with root $x=z_1$, given the somewhat technical definition of non-intertwined for cycles, it is possible that one might need the consider $Q_P$ to be travelled in the opposite direction for this to hold). \smallskip
     
        By \cref{cor:newguardedpath}, there exists a $z_1-z_2$ path, call it $Q$, of length at least two with internal vertices in $R$ such that $C_\ell$ has a strategy to guard $Q$, after a finite amount of turns. Set $Q_P'=Q_P[x,z_1]\oplus Q\oplus Q_P[z_2,y]$, $s(C_\ell)=V(Q)$ and $s(C)=\emptyset$. With $C$ now inactive, we relabel $C$ to be $C_\ell$ and vice versa, in order for \ref{item:nonintertwined} and \ref{item:extracop} to hold. The game is now in state $(\mathcal A',\mathcal Q',R',s')$, with transition of type \ref{item:reduction1}.\medskip
	
	    In all cases, we can reach a strictly smaller game state, which is a contradiction to the minimality of the current game state.
	\end{proof}
	
	\begin{claim}\label{claim:Hminor}
	    $H$ is a minor of $G$.
	\end{claim}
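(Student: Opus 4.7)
The plan is to construct an explicit $H$-minor model in $G$, contradicting the assumption that $G$ is $H$-minor-free. Since every $w \in W$ is initialized by \cref{claim:winitialized} and every $P \in \mathcal P$ is initialized by \cref{claim:Pinitialized}, the model $(\mathcal A, \mathcal Q)$ already provides a candidate realization of $H-h$; it remains to choose branch sets for the internal vertices of each path $P\in\mathcal P$ inside $Q_P$, and to use the robber territory $R$ as the branch set for $h$.

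Concretely, set $B_w := A_w$ for each $w \in W$, set $B_h := R$, and for each $P\in\mathcal P$ with internal vertices $z_1,\dots,z_k$ in order (where $k=|E(P)|-1$), partition the interior of $Q_P$ into $k$ non-empty consecutive subpaths serving as $B_{z_1},\dots,B_{z_k}$. Here the ``interior'' of $Q_P$ means the non-endpoint vertices if $Q_P$ is a path, and the non-root vertices if $Q_P$ is a cycle; in the cycle case the interior forms a path once $Q_P$ is broken at one of the two edges incident to its root, chosen consistently with the non-intertwined condition \ref{item:nonintertwined}. When $k=0$ (so $P\in M$ or $|E(P)|=1$), any vertices in the interior of $Q_P$ are simply absorbed into $B_u$ for one endpoint $u$ of $P$, which keeps $B_u$ connected.

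The counting that justifies this partition is the heart of the argument. By \cref{claim:sadjacent}, \cref{claim:sdisjoint}, and \cref{claim:ellPneighbours}, at least $\ell_P$ distinct vertices of $V(Q_P)$ are guarded by $\mathcal C_P$, and every such vertex is adjacent to $R$. By \ref{item:bagguard} together with \cref{claim:sdisjoint}, an endpoint of $Q_P$ lying in $A_w$ can be guarded by $\mathcal C_P$ only when $f(w)=P$, so at most $|f^{-1}(P)|$ of the guarded vertices are endpoints. Hence the interior of $Q_P$ contains at least $\ell_P - |f^{-1}(P)| \ge |E(P)|-1 = k$ guarded vertices; choose the first $k$ of them as anchors along $Q_P$ and let $B_{z_i}$ consist of the interior vertices strictly after the $(i-1)$-st anchor up to and including the $i$-th, with $B_{z_k}$ absorbing the remaining tail. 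Each $B_{z_i}$ is then a non-empty connected subpath of $Q_P$ containing a vertex adjacent to $R$.

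It remains to verify the minor-model axioms. The branch sets are pairwise disjoint by \ref{item:defbags}, \ref{item:defpaths}, and \ref{item:Rdef}, and each is connected. Every edge of $H-h$ is realized: edges on some $P\in\mathcal P\setminus M$ become adjacencies between consecutive branch sets along $Q_P$, and edges of $M$ become the connecting edge of $Q_P$ between $A_u$ and $A_v$ after absorbing any slack. Every edge $hx$ of $H$ is realized because $B_x$ contains a vertex adjacent to $R=B_h$: when $x\in W$ this is \cref{claim:initializedvertex}, and when $x$ is internal to some $P$ it comes from the anchor inside $B_x$. This exhibits $H$ as a minor of $G$, the desired contradiction. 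The main technical obstacle I anticipate is the bookkeeping in the cycle subcases---both when $P$ is itself a cycle and when $Q_P$ is realized as a cycle---where one must align the broken-cycle orientation with the non-intertwined condition so that the guarded vertices linearize correctly around the root.
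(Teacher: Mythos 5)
Your proposal is correct and follows essentially the same route as the paper: both use \cref{claim:winitialized}, \cref{claim:Pinitialized}, \cref{claim:initializedvertex}, and the counting via \cref{claim:sadjacent}, \cref{claim:sdisjoint}, \ref{item:bagguard} and \cref{claim:ellPneighbours} to show each $Q_P$ has at least $\ell_P-|f^{-1}(P)|\geq |E(P)|-1$ interior vertices adjacent to $R$. The only difference is presentational: you phrase the construction as explicit branch sets while the paper phrases it as a sequence of edge contractions, which are equivalent descriptions of the same minor model.
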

	\begin{proof}
	    We use the model in the current state $(\mathcal A,\mathcal Q,R,s)$ to construct a minor of $H$ in $G$.
	    
	    First, contract all edges in the connected component $R$ and call the resulting vertex $h'$. Any vertex in $V(G)\setminus R$ adjacent to $R$ is now adjacent to $h'$.
	    
	    For every $w\in W$, $A_w$ is non-empty by \cref{claim:winitialized}. By the  definition in \ref{item:defbags}, $G[A_w]$ is connected for every $w \in W$. Hence, we may contract every edge between vertices in $A_w$ to obtain one vertex, which we denote $w'$. Since $A_w$ contains at least one vertex adjacent to $R$ by \cref{claim:initializedvertex}, $h'$ is adjacent to $w'$ in the resulting graph.
	    
	    By \cref{claim:Pinitialized}, every $P\in \mathcal P$ is initialized. For every $P\in\mathcal P$ and every edge $uv\in Q_P$, contract the edge $uv$ if either $u$ or $v$ is not adjacent to $h'$. Let $P'$ be the resulting path (or cycle), which has ends $u'$ and $v'$. With these contractions, every vertex of $P'$ is adjacent to $h'$.
	    
	    By \cref{claim:sdisjoint}, the images of $s$ are disjoint, which contain only vertices adjacent to $R$ (and now $h'$) by \cref{claim:sadjacent}. Let $P\in \mathcal P$ with ends $u,v$. By \ref{item:bagguard}, the end vertices of $Q_P$, say $x,y$, are in one of the sets of $s(\mathcal C_P)$ only if $P=f(u)$ and $P=f(v)$ respectively. Thus, $P'$ contains $\left(\sum_{C\in \mathcal C_P} |s(C)|\right)-|f^{-1}(P)|$ internal vertices. By \cref{claim:ellPneighbours}, $P'$ then contains at least $\ell_P-|f^{-1}(P)|\geq |E(P)|-1$ internal vertices. As the number of edges in a path or a rooted cycle is one more than the number of internal vertices, $P'$ contains at least $|E(P)|$ edges. We may contract further edges of $P'$ in order for $P'$ to contain exactly $|E(P)|$ edges.

	    Mapping $h$ to $h'$, $w$ to $w'$ for every $w\in W$ and $P$ to $P'$ for every $P\in \mathcal P$, we conclude that $H$ is isomorphic to a subgraph of our contracted graph, and so $H$ is a minor of $G$.
	\end{proof}
	
	Given that $G$ is $H$-minor-free, \cref{claim:Hminor} yields the  contradiction. This completes the proof of the theorem.
\end{proof}

\subsection{Key ideas of the proof}\label{subsec:diffs}
We now highlight a few key elements of the proof of \cref{thm:main}.

The method introduced by Andreae in \cite{andreae_pursuit_1986} consists in, as long as the robber is not caught, gradually constructing a minor of $H-h$ by buildings bags corresponding to the vertices of $H-h$ and using path guarding strategies for cops in order to add paths between bags when the corresponding vertices are adjacent in $H-h$. These paths are taken through the robber's territory, gradually reducing its size. Once the minor of $H-h$ is completed, contracting the robber's territory then yields a minor of $H$. As the graph is $H$-minor-free, this process cannot be completed, and hence the cops must eventually capture the robber. Our proof builds on this basic framework in multiple ways.

\begin{enumerate}
	\item In the proof of \cref{thm:andreaemain}, exactly one cop is used to recreate each edge of $H-h$ in the minor by guarding a path between two bags corresponding to adjacent vertices of $H-h$ (which yields the bound $c(G)\leq |E(H-h)|$). In a specific proof sketch for wheel graphs (a cycle plus a universal vertex), Andreae \cite[Theorem 3]{andreae_pursuit_1986} uses the fact that one cop can be used to recreate at least three vertices of the cycle: when a cop is guarding fewer than three (for simplicity, say there are two) vertices adjacent to the robber's territory, the extra cop can relieve this cop by guarding a new path between these two vertices through the robber's territory. Using a more specific assignment of cops, in which now cops are grouped together to guard paths (and rooted cycles) between pairs of ``core'' vertices $W$, the same idea can be used for general graphs.
	\item To go from a model of $H-h$ to a model of $H$, one requirement is that each bag $A_w$ (which will be contracted to give $w$ in the minor) must contain at least one vertex adjacent to $R$ (at least, when $wh\in E(H)$). Furthermore, the existence of at least one such vertex is needed when adding a new $u-v$ path to the model, as it allows us to get a new path between $A_u$ and $A_v$ passing through $R$. In Andreae's proof, when $A_w$ no longer has a neighbour in $R$, it gains one by absorbing parts of one of the paths incident to it, or otherwise it is uninitialized. Using this approach directly with the previous improvement, we would get \cref{cor:simplepaths} below. However this is not optimal, as it requires that the group of cops of any of the paths be large enough to guard not only the required number of neighbours of $R$ internally in the path, but in the ends of the path as well. Hence, another key idea in our proof is that it is in fact possible to designate for each $w$ from which path to absorb vertices to acquire a vertex adjacent to $R$; this is the role played by $f$. If this is not possible, i.e. if $f(w)$ is uninitialized, a neighbour of $R$ will be acquired from another path incident to $A_w$ (if one such path exists), and then we use a cop of $\mathcal C_{f(w)}$ to guard this vertex. This is reflected by \ref{item:bagguard}.
	\item In the last point, what happens if instead of a long path between $u,v\in W$, there is simply an edge, and $uv$ is not in the image of $f$? In some sense, to get the minor we do not need any neighbour of $R$ to be present in the path $Q_{uv}$ which will be contracted to $uv$. However, if we take $Q_{uv}$ to be a path between $A_u$ and $A_v$, a cop is still potentially needed: even though $Q_{uv}$ is not required to contain a neighbour in $R$, it might contain one. However, when we first initialize $A_u$ or $A_v$, we can do so in a way that $Q_{uv}$ is only an edge and thus no cop will need to be assigned to guard it. This is the role the matching $M$ plays. Let us note that this only works when $M$ is a matching; this is a consequence of the fact that we cannot much control the order in which sets $A_w$ are initialized and uninitialized.
	\item When using a group of cops to recreate a path to build the minor of $H-h$, one extra cop is often required. More precisely, we expect every cop to be able to guard at least three vertices adjacent to $R$. However, if one cop is guarding exactly two vertices adjacent to $R$, there is generally no way for that cop to start guarding a new path through $R$ between these vertices without losing control of one of these vertices, and all other cops might also be busy. We can use the extra cop to do so, after which the first cop can be relieved (the cops may then switch their roles); this is what is suggested for wheel graphs in \cite{andreae_pursuit_1986}. In some very specific cases with short paths, we can guarantee that if the cops are not on average guarding at least three neighbours of $R$, then one of these cops is necessarily guarding at most one neighbour of $R$, in which case the extra cop is not required. We will see in the applications in the next section that this difference can be very useful when $H$ is small. We note that the extra cop is also used in very specific technical situations involving edges of the matching or cycles in \cref{claim:winitialized} and \cref{claim:Pinitialized}.
\end{enumerate}

Note that many of the technicalities of the proof concern the interplay between these various improvements. Indeed, we often need to break the proofs of the claims into various cases depending on whether, for instance, the $P\in \mathcal P$ in question is a path or a cycle, is in the image of $f$ or not and is in the matching $M$ or not. These complexities also require a more technical proof statement and system of states and state transitions. Our proof is also quite formal when it comes to path guarding strategies, hence the use of \cref{cor:newguardedpath} and the specific formulation of condition \ref{item:funcs}.

\section{Applications}\label{sec:applications}

In this section, we will see various consequences of our main result \cref{thm:main}.

\subsection{Simplified versions of the main result}
In many cases, one might not need the full flexibility of \cref{thm:main}, which is quite technical. In this section we present some simpler versions of this result. This will also allow us to better isolate the various improvements described in \cref{subsec:diffs}.

Firstly, we have a version of \cref{thm:main} in which the only difference with \cref{thm:andreaemain} is the addition of a matching of ``free'' edges.

\begin{cor}\label{cor:simplematching}
	Let $H$ be a graph, $h\in V(H)$ and $M$ be a matching of $H-h$ such that $H-h-M$ has no isolated vertex. If $G$ is a connected $H$-minor-free-graph, then $c(G)\leq |E(H-h)|-|M|$.
\end{cor}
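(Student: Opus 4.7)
The plan is to obtain Corollary~\ref{cor:simplematching} as a direct application of Theorem~\ref{thm:main} by constructing a decomposition $\mathcal{H}=(h,W,\mathcal{P},M,f)$ of $H$ in which the matching playing the role of ``free'' edges is exactly the given matching $M$. Specifically, I would take $W=V(H-h)$, take $\mathcal{P}$ to be the set of all edges of $H-h$ viewed as paths of length one (so each $P\in\mathcal{P}$ has its two endpoints in $W$ and no internal vertex), and reuse the matching $M$ from the hypothesis as the matching in the decomposition. The function $f:W\to\mathcal{P}\setminus M$ needs to send each $u\in W$ to an edge of $H-h-M$ incident to $u$; such an edge exists for every $u$ precisely because $H-h-M$ has no isolated vertex, so $f$ is well defined.

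Checking conditions \ref{item:core}--(e) of Definition~\ref{def:path-decomp} is immediate: $W$ is nonempty (otherwise $H-h$ is edgeless and $M$ is empty, in which case the bound is trivial), the paths of length one are pairwise internally vertex-disjoint since they have no internal vertices, every edge of $H-h$ belongs to some $P\in\mathcal{P}$, and by construction $u$ is an end of $f(u)$ for each $u\in W$.

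The crux is bounding $\ell_P$ from Definition~\ref{def:length-param}. For $P\in M$ we have $\ell_P=0$ by definition. For $P=uv\notin M$, the only vertices of $W$ that can map to $P$ under $f$ are $u$ and $v$, so $|f^{-1}(P)|\le 2$; combined with $|E(P)|=1$, this yields
\[
\ell_P=\max(|E(P)|-1+|f^{-1}(P)|,\,1)=\max(|f^{-1}(P)|,1)\in\{1,2\}.
\]
Consequently $\ell_P\in\{0,1,2,4\}$ for every $P\in\mathcal{P}$, which means the indicator $\mathds{1}_\ell$ equals $0$.

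Plugging this into Theorem~\ref{thm:main} gives $c(G)\le \sum_{P\in\mathcal{P}}\lceil\ell_P/3\rceil$, where each $P\in M$ contributes $0$ and each $P\in\mathcal{P}\setminus M$ contributes $\lceil \ell_P/3\rceil=1$. Thus $c(G)\le |\mathcal{P}\setminus M|=|E(H-h)|-|M|$, which is the desired bound. There is no real obstacle here beyond verifying the two easy facts that $|f^{-1}(P)|\le 2$ always and that the indicator $\mathds{1}_\ell$ vanishes; both follow automatically from the fact that we chose every element of $\mathcal{P}$ to be a single edge.
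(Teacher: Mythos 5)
Your proposal is correct and follows essentially the same route as the paper: both take $W=V(H-h)$, view every edge of $H-h$ as a length-one path in $\mathcal P$, reuse the given matching $M$, pick $f$ using the no-isolated-vertex hypothesis, and observe that $\ell_P\le 2$ forces $\mathds{1}_\ell=0$ so each non-matching edge contributes exactly one cop. The only difference is that you spell out the verification of \cref{def:path-decomp} and the bound $|f^{-1}(P)|\le 2$ in more detail than the paper does.
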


\begin{proof}
    Let $W=V(H-h)$, let $\mathcal P=E(H-h)$ (considering every edge as a path of length 1) and let $f$ be arbitrary; at least one such function exists since every vertex of $H-h-M$ is not isolated. Then, $(h,W,\mathcal P,M,f)$ is a decomposition of $H$.
    
    For every $P\in \mathcal P$, we have that $|E(P)|=1$ and thus \begin{align*}\ell_P&= \begin{cases}
		0&P\in M\\
		\max(|E(P)|-1+|f^{-1}(P)|,1)&P\notin M
	\end{cases}\\&\leq \begin{cases}
		0&P\in M\\
		2&P\notin M.
	\end{cases}\end{align*}
	
	This implies that $\mathds 1_\ell=0$. Hence, by \cref{thm:main} we have that 
    
	$$c(G)\leq \mathds{1}_{\ell}+\sum_{P\in \mathcal P}\left\lceil\frac{\ell_P}{3}\right\rceil\leq \sum_{e\in E(H-h)\setminus M}\left\lceil\frac{2}{3}\right\rceil+\sum_{e\in M}\left\lceil\frac{0}{3}\right\rceil=|E(H-h)|-|M|.$$
\end{proof}

We might also want a version of \cref{thm:main} in which we use the improvements for long paths in $H-h$ but without some of the technicalities.

\begin{cor}\label{cor:simplepaths}
    Let $H$ be a graph and $h\in V(H)$ be a vertex such that $H-h$ has no isolated vertex. Let $W\subseteq V(H-h)$ be non-empty and let $\mathcal P$ be a collection of pairwise internally vertex-disjoint paths and cycles with end vertices in $W$ such that every edge of $H-h$ is contained in some $P\in\mathcal P$.
	
 If $G$ is a connected $H$-minor-free graph, then
	$$c(G)\leq 1+\sum_{P\in \mathcal P}\left\lceil\frac{|V(P)|}{3}\right\rceil.$$
\end{cor}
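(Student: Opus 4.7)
The plan is to derive \cref{cor:simplepaths} directly from \cref{thm:main} by constructing a suitable decomposition of $H$. I would set $\mathcal{H} = (h, W, \mathcal{P}, \emptyset, f)$, taking the empty matching and choosing $f \colon W \to \mathcal{P}$ so that each $w \in W$ is an endpoint of $f(w)$. Such an $f$ exists because $H-h$ has no isolated vertex: every $w \in W$ has an incident edge in $H-h$, and that edge lies in some $P \in \mathcal{P}$ with $w$ among its endpoints (it cannot be interior to multiple paths by the pairwise internal disjointness, and if it were interior to one path, having another incident edge --- needed since the endpoints of paths in $\mathcal{P}$ lie in $W$ and every edge is covered --- would force $w$ to also be an endpoint of some $P \in \mathcal{P}$). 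With this choice, $\mathcal{H}$ meets all conditions of \cref{def:path-decomp}, so \cref{thm:main} applies.

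The only real computation is bounding $\ell_P$ by $|V(P)|$, via \cref{def:length-param}. If $P$ is a path with two distinct endpoints, then $|E(P)| = |V(P)|-1$ and $|f^{-1}(P)| \le 2$, so
\[ \ell_P = \max(|V(P)| - 2 + |f^{-1}(P)|,\, 1) \le |V(P)|. \]
If $P$ is a rooted cycle with root $u$, then $u$ is the unique ``end vertex'', so $|f^{-1}(P)| \le 1$; combined with $|E(P)| = |V(P)|$, this gives
\[ \ell_P = \max(|V(P)| - 1 + |f^{-1}(P)|,\, 1) \le |V(P)|. \]
In both cases $\lceil \ell_P / 3 \rceil \le \lceil |V(P)|/3 \rceil$.

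Finally, bounding $\mathds{1}_\ell \le 1$, \cref{thm:main} yields
\[ c(G) \le \mathds{1}_\ell + \sum_{P \in \mathcal{P}} \left\lceil \frac{\ell_P}{3} \right\rceil \le 1 + \sum_{P \in \mathcal{P}} \left\lceil \frac{|V(P)|}{3} \right\rceil, \]
which is the desired inequality. There is no substantive obstacle here, as the corollary is essentially a clean specialization of \cref{thm:main}. The only point worth flagging is the bookkeeping: the discrepancy between $|E(P)|$ for paths versus cycles is precisely compensated by the discrepancy in the allowable values of $|f^{-1}(P)|$, which is why the uniform bound $\ell_P \le |V(P)|$ works across both cases.
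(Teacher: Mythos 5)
Your proposal is correct and follows the same route as the paper: take the decomposition $(h,W,\mathcal P,\emptyset,f)$ with an arbitrary valid $f$, bound $\ell_P\leq|V(P)|$ separately for paths ($|f^{-1}(P)|\leq 2$) and rooted cycles ($|f^{-1}(P)|\leq 1$), and use $\mathds{1}_\ell\leq 1$. Your extra remarks on why $f$ exists are more detailed than the paper's one-line justification, but the substance of the argument is identical.
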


\begin{proof}
    Let $f$ be arbitrary; at least one valid choice exists given that no vertex of $W$ is isolated. Then, $(h,W,\mathcal P,\emptyset,f)$ is a decomposition of $H$.
    
    If $P\in \mathcal P$ is a path, $|f^{-1}(P)|\leq 2$, and so $\ell_P\leq |E(P)|+1=|V(P)|$. If $P\in \mathcal P$ is a cycle, then $|f^{-1}(P)|\leq 1$, and so $\ell_P\leq |E(P)|=|V(P)|$. Furthermore, $\mathds{1}_{\ell}\leq 1$.
\end{proof}

\subsection{Recovering Andreae's results}
Here we show that \cref{thm:main} is indeed a generalization of Andreae's results. Firstly, we indeed recover \cref{thm:andreaemain}, which we restate for convenience.

\thmandreaemain*

\begin{proof}
    Apply \cref{cor:simplematching} with $M=\emptyset$.
\end{proof}

Consider the wheel graph $W_t=U(C_t)$ (where $C_t$ is the cycle graph on $t$ vertices). As noted in the introduction, Andreae proved the following. As noted in \cref{subsec:diffs}, the proof of that result partially inspired \cref{thm:main}. We can recover that result.

\begin{thm}
    If $G$ is a connected $W_t$-minor-free graphs ($t\geq 3$), then $c(G)\leq \left\lceil\frac{t}{3}\right\rceil+1$.
\end{thm}

\begin{proof}
    Apply \cref{cor:simplepaths} with $h$ being the universal vertex, $W=\{u\}$ where $u$ is some arbitrary vertex of $W_t-h$ and $\mathcal P$ containing only the cycle $W_t-h$ which we root at $u$.
\end{proof}

Further results of Andreae, for $K_{3,3}$-minor-free graphs and $K_{2,3}$-minor-free graphs, are recovered in the next subsection. We note however that we cannot recover all of Andreae's results for small graphs, in particular the upper bound of 3 on the cop number of connected $K_5$-minor-free graphs. Andreae's method to prove this, although similar to the methods used to prove \cref{thm:andreaemain}, constructs the minor more carefully, in a way which only works for very small graphs. In particular, whereas in the general framework used to prove \cref{thm:andreaemain} and \cref{thm:main} we do not have much control over what $A_w$ (the set of vertices which are going to be contracted to obtain $w$) looks like, in Andreae's proof for $K_5$-minor-free graphs the structure of the model of $H$ is much more rigid; some edges in the minor can be obtained as a consequence of the fact that when building a minor of a small graph, one can keep track of the presence of specific vertices and edges.

\subsection{Complete bipartite graphs}

We can improve the bound from $2t$ to $t$ for $K_{3,t}$-minor-free graphs.
\begin{cor}
	If $G$ is a connected $K_{3,t}$-minor-free graph $(t\geq 2)$, then $c(G)\leq t$.
\end{cor}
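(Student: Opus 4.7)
The plan is to apply $\cref{thm:main}$ to a carefully chosen decomposition of $H := K_{3,t}$. Write the bipartition of $H$ as $\{a_1, a_2, a_3\} \sqcup \{b_1, \dots, b_t\}$ and set $h = a_1$, so that $H - h = K_{2,t}$. Take $W = \{a_2, a_3\}$, and for each $j \in [t]$ let $P_j$ denote the length-two path $a_2 - b_j - a_3$. Then $\mathcal{P} := \{P_1, \dots, P_t\}$ is a family of pairwise internally vertex-disjoint paths whose endpoints lie in $W$ and whose union covers every edge of $H - h$, so conditions (a)--(c) of $\cref{def:path-decomp}$ hold. Put $M = \emptyset$, which satisfies condition (d) vacuously.

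The decisive design choice is the function $f \colon W \to \mathcal{P}$: assuming $t \geq 2$, I would spread its two values across \emph{distinct} paths, setting $f(a_2) = P_1$ and $f(a_3) = P_2$. With this choice $|E(P_j)| = 2$ for every $j$, $|f^{-1}(P_j)| = 1$ for $j \in \{1,2\}$, and $|f^{-1}(P_j)| = 0$ otherwise. Consequently $\ell_{P_1} = \ell_{P_2} = \max(2 - 1 + 1,\, 1) = 2$ and $\ell_{P_j} = \max(2 - 1 + 0,\, 1) = 1$ for $j \geq 3$. Every $\ell_P$ lies in $\{1,2\}$, so the indicator $\mathds{1}_\ell$ vanishes, and $\cref{thm:main}$ yields
$$c(G) \;\leq\; 0 + \left\lceil \tfrac{2}{3} \right\rceil + \left\lceil \tfrac{2}{3} \right\rceil + (t-2)\left\lceil \tfrac{1}{3} \right\rceil \;=\; 1 + 1 + (t-2) \;=\; t.$$

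The main obstacle is conceptual rather than technical: one must realise that splitting $f(a_2)$ and $f(a_3)$ across two paths is strictly better than collapsing them onto one. The alternative $f(a_2) = f(a_3) = P_1$ would push $\ell_{P_1}$ to $3$, which lies outside $\{0,1,2,4\}$; the indicator cop $C_\ell$ would then switch on and the bound would degrade to $t + 1$. This is precisely the ``third improvement'' of $\cref{subsec:diffs}$ at work. The edge case $t = 1$ falls outside the scope of this decomposition (one needs at least two paths in $\mathcal{P}$ to realise $f$ on distinct paths) and would need to be addressed by direct structural considerations about $K_{1,3}$-minor-free graphs, since a connected such graph has maximum degree at most $2$ and is therefore a path or a cycle.
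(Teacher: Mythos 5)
Your argument for $t\geq 2$ is exactly the paper's proof: the same decomposition of $K_{3,t}$ with $h$ taken in the size-$3$ part, $W$ the other two vertices of that part, $\mathcal P$ the $t$ internally disjoint length-$2$ paths, and $f$ split across two distinct paths so that every $\ell_P\leq 2$, $\mathds{1}_{\ell}=0$, and the sum of ceilings is $t$. The only divergence is $t=1$, which you leave open; the paper dispatches it by asserting that a connected $K_{1,3}$-minor-free graph is a path, but your observation that it could also be a cycle is correct and actually exposes a real problem, since cycles of length at least $4$ are $K_{1,3}$-minor-free with cop number $2$ — so the $t=1$ case needs the extra care you flag rather than the paper's one-line dismissal.
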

\begin{proof}
	Let $h,a,b$ be the vertices in the part of $K_{3,t}$ with 3 vertices. Then, $K_{3,t}-h$ consists of exactly $t$ internally disjoint paths $P_1,\dots,P_t$ of length 2 between $a$ and $b$. Let $W=\{a,b\}$, $\mathcal P=\{P_1,\dots,P_t\}$ and define $f$ by $f(a)=P_1$, $f(b)=P_2$. Then, $(h,W,\mathcal P,\emptyset,f)$ is a decomposition of $H$.
 
    We have that $|E(P_i)|=2$ for $i\in [t]$, $|f^{-1}(P_1)|=|f^{-1}(P_2)|=1$ and $|f^{-1}(P_i)|=0$ for $i\in [t]\setminus\{1,2\}$. Hence $\ell_{P_i}\leq 2$ for every $i\in [t]$, and in particular $\mathds 1_\ell=0$. \cref{thm:main} then yields the result.
\end{proof}
In particular, we recover the bound for $K_{3,3}$-minor-free graphs from \cite{andreae_pursuit_1986} without needing a separate argument.

We can also improve the upper bounds for $K_{2,t}$-minor-free from $t$ to essentially half that.
\begin{cor}\label{cor:k2t}
	If $G$ is a connected $K_{2,t}$-minor-free graph $(t\geq 1)$, then $c(G)\leq \left\lceil\frac{t+1}{2}\right\rceil$.
\end{cor}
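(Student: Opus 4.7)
The plan is to apply \cref{thm:main} not to $K_{2,t}$ itself but to a slightly larger auxiliary graph $H'$ that contains $K_{2,t}$ as a subgraph, so that by transitivity of the minor relation every $K_{2,t}$-minor-free graph is also $H'$-minor-free. The key trick is to add a (near-)perfect matching on the degree-$2$ vertices of $K_{2,t}$: after removing one of the degree-$t$ vertices, each matched pair combines with the remaining hub vertex into a triangle that can be covered by a single cop.

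First I would dispose of the trivial case $t=1$: a connected $K_{2,1}$-minor-free graph has no two adjacent edges, so it is a single vertex or a single edge, hence has cop number $1$. For $t\geq 2$, let $a,b$ be the degree-$t$ vertices and $c_1,\dots,c_t$ the degree-$2$ vertices of $K_{2,t}$, and define $H'$ by adding the edges $c_{2j-1}c_{2j}$ for $1\leq j \leq \lfloor t/2 \rfloor$. In $H'-a$, each matched pair together with $b$ spans a triangle $T_j$ on vertices $b,c_{2j-1},c_{2j}$ (rooted at $b$), and these triangles are pairwise internally vertex-disjoint. I would then apply \cref{thm:main} to $H'$ with the decomposition $\mathcal{H}=(a,W,\mathcal{P},\emptyset,f)$ chosen as follows: when $t=2k$, set $W=\{b\}$, $\mathcal{P}=\{T_1,\dots,T_k\}$, $f(b)=T_1$; when $t=2k-1$, set $W=\{b,c_t\}$, $\mathcal{P}=\{T_1,\dots,T_{k-1}\}\cup\{bc_t\}$, $f(b)=f(c_t)=bc_t$.

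A direct computation of the $\ell_P$-values then yields the desired bound. In the even case, $\ell_{T_1}=\max(3-1+1,1)=3$ while $\ell_{T_j}=2$ for $j\geq 2$, so $\mathds{1}_{\ell}=1$ and the bound is $1+\lceil 3/3\rceil+(k-1)\lceil 2/3\rceil = k+1 = \lceil(t+1)/2\rceil$. In the odd case, routing both $f(b)$ and $f(c_t)$ onto the pendant edge gives $\ell_{bc_t}=\max(1-1+2,1)=2$, and each $\ell_{T_j}=\max(3-1+0,1)=2$, so $\mathds{1}_{\ell}=0$ and the bound becomes $(k-1)\lceil 2/3\rceil+\lceil 2/3\rceil = k = \lceil(t+1)/2\rceil$.

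The main subtlety is the odd case: sending \emph{both} $f$-preimages to the pendant edge $bc_t$ is what keeps every $\ell_{T_j}$ equal to $2$ rather than $3$; any other choice would inflate some triangle's $\ell$-value to $3$, trigger the extra cop $C_\ell$, and lose the improvement by one. Once the two decompositions above are set up correctly, the rest of the proof is just bookkeeping via \cref{def:length-param} and \cref{thm:main}.
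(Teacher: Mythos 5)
Your proof is correct and takes essentially the same approach as the paper: your auxiliary graph $H'$ for odd $t$ coincides (up to relabelling) with the paper's $U\left(U\left(\frac{t-1}{2}K_2+K_1\right)\right)$, including the key step of sending both $f$-images to the pendant edge so that every $\ell_P=2$ and $\mathds{1}_{\ell}=0$. The only cosmetic difference is the even case, which you handle by a direct application (a perfect matching on the degree-$2$ vertices, paying the extra cop via $\ell_{T_1}=3$), whereas the paper reduces even $t$ to the odd case; both yield the same bound.
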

\begin{proof}
    First note that it suffices to show the result when $t$ is odd, since $K_{2,{t-1}}$-minor-free graphs are also $K_{2,t}$-minor-free, and in this case $\left\lceil\frac{(t-1)+1}{2}\right\rceil=\left\lceil\frac{t}{2}\right\rceil=\frac{t-1}{2}+1$.
    
	Consider the graph $H=U\left(U\left(\frac{t-1}{2}K_2+K_1\right)\right)$. In other words, if $h$ is one of the universal vertices, $H-h$ is a graph obtained by identifying one vertex of $\frac{t-1}{2}$ triangles and of one edge. In particular, $K_{2,t}$ is a subgraph of $H$, and so it suffices to prove that connected $H$-minor-free graphs have cop number at most $\frac{t-1}{2}+1$. Let $G$ be such a graph.
	
	Let $a$ be the universal vertex of $H-h$. We have that $H-h$ is the union of $\frac{t-1}{2}$ internally-disjoint $a$-rooted cycles of length 3 (write $\mathcal P_{aa}$ for this collection of cycles), and one other edge $ab$. Let $W=\{a,b\}$, let $\mathcal P=\mathcal P_{aa}\cup\{ab\}$ and define $f$ by $f(a)=f(b)=ab$. Then, $(h,W,\mathcal P,\emptyset,f)$ is a decomposition of $H$.
	
	For every cycle (of length 3) $P\in \mathcal P_{aa}$, we have $\ell_P=2$, and $\ell_{ab}=2$. In particular, $\mathds 1_\ell=0$. \cref{thm:main} yields that $c(G)\leq \frac{t-1}{2}+1$ as desired.
\end{proof}

Note that both of these corollaries give us an bound of 2 for $K_{2,3}$-minor-free graphs, which does not follow from \cref{thm:andreaemain} but is a consequence of Andreae's \cite{andreae_pursuit_1986} stronger upper bound of 2 on the cop number of $K_{3,3}^-$-minor-free graphs.

\subsection{Complete graphs}
One of the consequences of \cref{thm:andreaemain} is that if $G$ is a connected $K_t$-minor-free graph for $t\geq 3$, then $c(G)\leq \binom{t-1}{2}=\frac{(t-1)(t-2)}{2}$.

We can improve this result.
\begin{cor}
	If $G$ is a connected $K_t$-minor-free graph $(t\geq 4)$, then $c(G)\leq \left\lfloor\frac{(t-2)^2}{2}\right\rfloor$.
\end{cor}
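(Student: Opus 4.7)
The plan is to deduce the bound directly from \cref{cor:simplematching} applied to $H = K_t$, using a maximum matching of $K_{t-1}$ as the ``free'' matching. Pick any vertex $h \in V(K_t)$, so that $K_t - h = K_{t-1}$, and let $M$ be a maximum matching of $K_{t-1}$; this matching has size $\lfloor (t-1)/2 \rfloor$. To use \cref{cor:simplematching} I need to check that $K_{t-1} - M$ has no isolated vertex: every vertex of $K_{t-1}$ has degree $t - 2 \geq 2$ in $K_{t-1}$ (this is where $t \geq 4$ matters), and loses at most one edge upon removal of $M$, leaving degree at least $1$.

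With the hypothesis verified, \cref{cor:simplematching} immediately gives
$$c(G) \;\leq\; |E(K_{t-1})| - |M| \;=\; \binom{t-1}{2} - \left\lfloor\frac{t-1}{2}\right\rfloor.$$
The only remaining step is an arithmetic identity: check that this equals $\lfloor (t-2)^2/2\rfloor$. A two-case calculation on the parity of $t$ does the job. If $t = 2k+1$ is odd, the left side is $k(2k-1) - k = 2k(k-1)$, and $\lfloor (2k-1)^2/2 \rfloor = (4k^2-4k)/2 = 2k(k-1)$. If $t = 2k+2$ is even, the left side is $(2k+1)k - k = 2k^2$, and $\lfloor (2k)^2/2 \rfloor = 2k^2$. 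Both cases match, completing the proof.

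The one thing to sanity-check is whether a more clever invocation of \cref{thm:main} (with longer paths or cycles in the decomposition) could do better; the answer is no. Any vertex $v \in V(K_{t-1}) \setminus W$ would need to be an internal vertex of some path in $\mathcal{P}$, but internal vertices of a path meet only two of its edges, while paths in $\mathcal{P}$ are required to be internally vertex-disjoint and to cover every edge of $K_{t-1}$; the remaining $t - 4$ edges incident to $v$ would then be uncoverable. Thus $W$ is forced to equal $V(K_{t-1})$, every member of $\mathcal{P}$ is a single edge, and the only slack in \cref{thm:main} is the choice of matching $M$. So \cref{cor:simplematching} with a maximum matching of $K_{t-1}$ is genuinely the best this framework can offer for excluding $K_t$, and it is exactly what yields $\lfloor (t-2)^2/2 \rfloor$. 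The main ``obstacle'' is therefore not conceptual but bookkeeping — just the parity case analysis above.
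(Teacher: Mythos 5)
Your proof is correct and is essentially the paper's own argument: apply \cref{cor:simplematching} to $K_t$ with an arbitrary $h$ and a maximum matching of $K_{t-1}$, note that $t\geq 4$ guarantees no isolated vertex survives, and verify the arithmetic identity $\binom{t-1}{2}-\left\lfloor\frac{t-1}{2}\right\rfloor=\left\lfloor\frac{(t-2)^2}{2}\right\rfloor$. The closing optimality discussion is a harmless aside not needed for the result.
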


\begin{proof}
	Let $h$ be an arbitrary vertex of $K_t$, and let $M$ be a maximum matching of $K_t-h\simeq K_{t-1}$, which has size $\left\lfloor\frac{t-1}{2}\right\rfloor$. Note that $K_t-h-M$ contains no isolated vertex since $t\geq 4$. Then, \cref{cor:simplematching} yields that $$c(G)\leq |E(H-h)|-|M|=\binom{t-1}{2}-\left\lfloor\frac{t-1}{2}\right\rfloor=\left\lfloor\frac{(t-2)^2}{2}\right\rfloor.$$
\end{proof}

The cop number of $K_t$-minor-free graphs in particular has received some interest. Andreae \cite{andreae_pursuit_1986} posed as an open problem to find $K_t$-minor-free graphs with large cop number.

Furthermore, Bollobás, Kun and Leader \cite{bollobas_cops_2013} noted the bound on $K_t$-minor-free graphs is related to Meyniel's conjecture. Meyniel's conjecture \cite{frankl_cops_1987} is the most famous and important conjecture on the game of cops and robbers. It states that $c(G)=O(\sqrt n)$ if $G$ is a connected graph on $n$ vertices. A weaker but still open conjecture is the weak or soft Meyniel conjecture, stating that $c(G)=O(n^{1-\delta})$ for some fixed $\delta>0$. Bollobás, Kun and Leader note that if we prove that $K_t$-minor-free graphs have cop number at most $O(t^{2-\varepsilon})$, then weak Meyniel holds for $\delta=\frac{\varepsilon}{4-\varepsilon}$. Briefly, their argument goes as follows. Suppose we wish to bound the cop number of an arbitrary graph $G$ on $n$ vertices. If $G$ has a vertex $u$ of degree $\Omega(n^\delta)$, then place a cop on this vertex and proceed by induction on $G-N[u]$. Otherwise, $G$ has $O(n^{\delta+1})$ edges, and so $G$ cannot contain a complete minor on more than $O(n^{\frac{\delta+1}{2}})$ vertices. We may then apply the bound for graphs forbidding a complete minor to obtain the desired result.

We note that Bollobás, Kun and Leader's argument holds more generally. Suppose $\{G_t\}_{t\geq 1}$ is a family of graphs indexed by $t$ such that $e(t)=|E(G_t)|$ is monotone increasing, and let $f$ be a monotone increasing upper bound on the cop numbers of these graphs, i.e. $c(G_t)\leq f(t)$. If $f(e^{-1}(m))=O(m^{1-\varepsilon})$, then weak Meyniel holds for $\delta=\frac{\varepsilon}{2-\varepsilon}$.

In other words, if we find any class of graphs $G_t$ (not only complete graphs) for which the order of the cop number of $G_t$-minor-free graphs is polynomially smaller than that of the number of edges of $G_t$, one gets an improvement towards Meyniel.

\subsection{Linklessly embeddable graphs}

\begin{figure}
	
	\begin{subfigure}[t]{.24\textwidth}
		\center
			\begin{tikzpicture}[scale=1.4, dot/.style = {circle, fill, minimum
							size=#1, inner sep=0pt, outer sep=0pt}, dot/.default
							= 5pt
				]

				\node [dot] (0) at (2, 2) {};
				
				\node [dot] (1) at (1, 0) {};
				\node [dot] (2) at (2, 0) {};
				\node [dot] (3) at (3, 0) {};
				\node [dot] (4) at (1, 1) {};
				\node [dot] (5) at (2, 1) {};
				\node [dot] (6) at (3, 1) {};
				\node [dot] (7) at (1, 0.5) {};
				\node [dot] (8) at (2, 0.5) {};
				\node [dot] (9) at (3, 0.5) {};

				\draw (0) to (7);
				\draw[bend right=20] (0) to (8);
				\draw (0) to (9);

				\draw (1) to (7);
				\draw (7) to (4);
				\draw (1) to (5);
				\draw[bend right=12] (1) to (6);
				
				\draw (2) to (4);
				\draw (2) to (8);
				\draw (8) to (5);
				\draw (2) to (6);
				
				\draw[bend left=12] (3) to (4);
				\draw (3) to (5);
				\draw (3) to (9);
				\draw (9) to (6);
				
			\end{tikzpicture}
			\caption{$\mathcal P_1$ (Petersen graph)}
		\end{subfigure}
			\begin{subfigure}[t]{.24\textwidth}
	\center
			\begin{tikzpicture}[scale=1.4, dot/.style = {circle, fill, minimum
							size=#1, inner sep=0pt, outer sep=0pt}, dot/.default
							= 5pt
				]

				\node [dot] (0) at (2, 2) {};
				
				\node [dot] (1) at (1, 0) {};
				\node [dot] (2) at (2, 0) {};
				\node [dot] (3) at (3, 0) {};
				\node [dot] (4) at (1, 1) {};
				\node [dot] (5) at (2, 1) {};
				\node [dot] (6) at (3, 1) {};
				
				\node [dot] (7) at (1, 0.5) {};
				\node [dot] (8) at (3, 0.5) {};

				\draw (0) to (7);
				\draw[bend right=20] (0) to (2);

				\draw (0) to (5);
				\draw (0) to (8);

				\draw (1) to (7);
				\draw (4) to (7);
				\draw (1) to (5);
				\draw (1) to (6);
				
				\draw (2) to (4);
				\draw (2) to (5);
				\draw (2) to (6);
				
				\draw (3) to (4);
				\draw (3) to (5);
				\draw (3) to (8);
				\draw (6) to (8);
				
			\end{tikzpicture}
			\caption{$\mathcal P_2$}
		\end{subfigure}
		\begin{subfigure}[t]{.24\textwidth}
		\center
			\begin{tikzpicture}[scale=1.4, dot/.style = {circle, fill, minimum
							size=#1, inner sep=0pt, outer sep=0pt}, dot/.default
							= 5pt
				]

				\node [dot] (0) at (2, 2) {};
				
				\node [dot] (1) at (1, 0) {};
				\node [dot] (2) at (2, 0) {};
				\node [dot] (3) at (3, 0) {};
				\node [dot] (4) at (1, 1) {};
				\node [dot] (5) at (2, 1) {};
				\node [dot] (6) at (3, 1) {};
				
				\node [dot] (7) at (2,0.5) {};

				\draw (0) to (1);
				\draw[bend right=20] (0) to (7);
				\draw (0) to (3);
				\draw (0) to (4);
				\draw (0) to (6);

				\draw (1) to (4);
				\draw (1) to (5);
				\draw[bend right=12] (1) to (6);
				
				\draw (2) to (4);
				\draw (2) to (7);
				\draw (5) to (7);
				\draw (2) to (6);
				
				\draw[bend left=12] (3) to (4);
				\draw (3) to (5);
				\draw (3) to (6);
				
			\end{tikzpicture}
			\caption{$\mathcal P_3$}
		\end{subfigure}
		\begin{subfigure}[t]{.24\textwidth}
			\center
		\begin{tikzpicture}[scale=1.4, dot/.style = {circle, fill, minimum
							size=#1, inner sep=0pt, outer sep=0pt}, dot/.default
							= 5pt
				]

				\node [dot] (0) at (2, 2) {};
				
				\node [dot] (1) at (1, 0) {};
				\node [dot] (2) at (2, 0) {};
				\node [dot] (3) at (3, 0) {};
				\node [dot] (4) at (1, 1) {};
				\node [dot] (5) at (2, 1) {};
				\node [dot] (6) at (3, 1) {};

				\draw (0) to (1);
				\draw[bend right=20] (0) to (2);
				\draw (0) to (3);
				\draw (0) to (4);
				\draw (0) to (5);
				\draw (0) to (6);

				\draw (1) to (4);
				\draw (1) to (5);
				\draw (1) to (6);
				
				\draw (2) to (4);
				\draw (2) to (5);
				\draw (2) to (6);
				
				\draw (3) to (4);
				\draw (3) to (5);
				\draw (3) to (6);
				
			\end{tikzpicture}
			\caption{$\mathcal P_4$}
		\end{subfigure}
		\begin{subfigure}[t]{.3\textwidth}
		\center
			\begin{tikzpicture}[scale=1.4, dot/.style = {circle, fill, minimum
							size=#1, inner sep=0pt, outer sep=0pt}, dot/.default
							= 5pt
				]

				\node [dot] (0) at (2, 2) {};
				
				\node [dot] (1) at (1, 0) {};
				\node [dot] (2) at (2, 0) {};
				\node [dot] (3) at (3, 0) {};
				\node [dot] (4) at (1, 1) {};
				\node [dot] (5) at (2, 1) {};
				\node [dot] (6) at (3, 1) {};

				\draw (0) to (1);

				\draw (0) to (3);
				\draw (0) to (4);
				\draw (0) to (5);
				\draw (0) to (6);

				\draw (1) to (4);
				\draw (1) to (5);
				\draw (1) to (6);
				
				\draw (2) to (4);
				\draw (2) to (5);
				\draw (2) to (6);
				
				\draw (3) to (4);
				\draw (3) to (5);
				\draw (3) to (6);
				
				\draw[bend right=15] (1) to (3);
				
			\end{tikzpicture}
			\caption{$\mathcal P_5$}
		\end{subfigure}
		\begin{subfigure}[t]{.3\textwidth}
		\center
			\begin{tikzpicture}[scale=1.4, dot/.style = {circle, fill, minimum
							size=#1, inner sep=0pt, outer sep=0pt}, dot/.default
							= 5pt
				]

				\node [dot] (0) at (2.5, 2) {};
				
				\node [dot] (1) at (1.5, 0) {};
				\node [dot] (2) at (2.5, 0) {};
				\node [dot] (3) at (3.5, 0) {};
				\node [dot] (4) at (1, 1) {};
				\node [dot] (5) at (2, 1) {};
				\node [dot] (6) at (3, 1) {};
				\node [dot] (7) at (4, 1) {};

				\draw (0) to (4);
				\draw (0) to (5);
				\draw (0) to (6);
				\draw (0) to (7);

				\draw (1) to (4);
				\draw (1) to (5);
				\draw (1) to (6);
				
				\draw (2) to (4);
				\draw (2) to (5);
				\draw (2) to (6);
				
				\draw (3) to (4);
				\draw (3) to (5);
				\draw (3) to (6);
				
				\draw (7) to (1);
				\draw (7) to (2);

			\end{tikzpicture}
			\caption{$\mathcal P_6$ ($K_{4,4}^-$)}
		\end{subfigure}
		\begin{subfigure}[t]{.3\textwidth}
		\center
			\begin{tikzpicture}[scale=1.2, dot/.style = {circle, fill, minimum
							size=#1, inner sep=0pt, outer sep=0pt}, dot/.default
							= 5pt
				]

				\node [dot] (0) at (0, 2) {};
				
				\node [dot] (1) at (0, -1) {};
				\node [dot] (2) at (0.951, -0.309) {};
				\node [dot] (3) at (0.588, 0.809) {};
				\node [dot] (4) at (-0.588, 0.809) {};
				\node [dot] (5) at (-0.951, -0.309) {};

				\draw (1) to (0);
				\draw[bend left=12] (2) to (0);
				\draw (3) to (0);
				\draw (4) to (0);
				\draw[bend right=12] (5) to (0);

				\draw (2) to (1);
				\draw (3) to (1);
				\draw (4) to (1);
				\draw (5) to (1);
				
				\draw (3) to (2);
				\draw (4) to (2);
				\draw (5) to (2);
				
				\draw (4) to (3);
				\draw (5) to (3);
				
				\draw (5) to (4);

			\end{tikzpicture}
			\caption{$\mathcal P_7$ ($K_6$)}
		\end{subfigure}
	\caption[]{Petersen family.\footnotemark}\label{fig:petersen}
\end{figure}

\footnotetext{Drawings based on \cite{robertson_survey_1991}.}

A \emph{linkless embedding} of a graph is an embedding of the graph into $\mathbb{R}^3$ such that every pair of two disjoint cycles forms a trivial link (i.e., they do not pass through one another). A graph that has a linkless embedding is called \emph{linklessly embeddable}. Robertson, Seymour, and Thomas \cite{robertson_linkless_1993} showed that the linklessly embeddable graphs are exactly the graphs excluding the \emph{Petersen family} (see \cref{fig:petersen}) as minors. The Petersen family contains seven graphs that are all $\Delta-Y$ equivalent (i.e. can be obtained by replacing an induced claw by a triangle) to $K_6$, which notably includes $K_{4,4}^-$ and the Petersen graph.

Given the various topological results on the game of cops and robbers discussed in the introduction, one might then be interested in determining the maximum cop number of linklessly embeddable graphs. It follows from \cref{thm:andreaemain} that for any linklessly embeddable graph, $c(G) \le 9$: take $H=\mathcal P_4$ and let $h$ be the degree $6$ vertex ($H-h$ is then $K_{3,3}$).

Using our main result, we are able to improve this upper bound.

\begin{cor}
	If $G$ is a $\mathcal P_i$-minor-free graph $(i\in [4])$, then $c(G)\leq 6$. In particular, if $G$ is a connected linklessly embeddable graph, then $c(G)\leq 6$.
\end{cor}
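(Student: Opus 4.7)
The plan is to apply \cref{thm:main} individually to each graph $\mathcal P_i$ for $i\in [4]$, and to deduce the statement for linklessly embeddable graphs from the Robertson--Seymour--Thomas characterization, which guarantees that any such graph is $\mathcal P_i$-minor-free for every $i\in [7]$, and hence for any fixed $i\in [4]$ in particular.

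For each $i\in [4]$, I would take $h$ to be the ``apex'' vertex drawn at the top of the corresponding picture in \cref{fig:petersen}. A direct inspection shows that $\mathcal P_i - h$ is a subdivision of $K_{3,3}$: the six ``lower'' vertices form a core $W$, and the $k$ remaining vertices (with $k=0,1,2,3$ for $i=4,3,2,1$ respectively) each have degree $2$ in $\mathcal P_i-h$ and internally subdivide one of the nine $K_{3,3}$-edges across the bipartition on $W$. The collection $\mathcal P$ in the decomposition then consists of $9-k$ length-$1$ paths and $k$ length-$2$ paths between pairs of vertices in $W$.

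Next I would choose $M\subseteq \mathcal P$ to be a perfect matching of $W$ consisting only of length-$1$ paths, and define $f:W\to \mathcal P\setminus M$ so that its image $f(W)$ is another perfect matching of $W$ also consisting only of length-$1$ paths, with $f(u)=f(v)$ precisely when $uv$ is an edge of $f(W)$. Under this choice, $\ell_P=0$ for $P\in M$, $\ell_P=2$ for each of the three paths in $f(W)$, and $\ell_P=1$ for every other length-$1$ path (whose pre-image is empty) as well as for every length-$2$ path (since these too have $|f^{-1}(P)|=0$). In particular every $\ell_P$ lies in $\{0,1,2\}$, so $\mathds 1_\ell=0$, and \cref{thm:main} yields
\[
c(G)\le 0+3\cdot 0+3\cdot 1+(3-k)\cdot 1+k\cdot 1=6.
\]

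The main obstacle is verifying, in each of the four cases, that the required pair of vertex-disjoint perfect matchings of $W$ using only non-subdivided edges actually exists. For $i\in\{2,3,4\}$ this is straightforward, since at most two of the nine $K_{3,3}$-edges are subdivided and at least seven direct edges remain; a short Hall's-theorem-style inspection produces two disjoint perfect matchings. For $i=1$ (the Petersen graph) all three subdivided edges are present, and the crucial structural observation is that these three subdivided edges themselves form a perfect matching of $W$, so the remaining six direct edges form a $6$-cycle in $K_{3,3}$, which admits exactly two perfect matchings; these are disjoint and can be used as $M$ and $f(W)$, completing the plan in the Petersen case.
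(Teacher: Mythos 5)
Your proposal is correct and follows essentially the same route as the paper: the same apex choice of $h$, the same core $W$ consisting of the six branch vertices of the $K_{3,3}$-subdivision $\mathcal P_i-h$, with $M$ one perfect matching of unsubdivided edges and $f(W)$ a second, disjoint one, giving $\ell_P\in\{0,1,2\}$, $\mathds 1_\ell=0$, and the bound $6$ from \cref{thm:main}. The paper handles the existence of the two disjoint matchings by simply labelling $W=\{a_1,a_2,a_3,b_1,b_2,b_3\}$ so that the subdivided edges lie only among the pairs $a_ib_i$ (which is exactly your observation that the subdivided edges form a partial matching), but the substance is identical.
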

\begin{proof}
	Let $h$ be the top vertex of $\mathcal P_i$ in the drawings in \cref{fig:petersen}. Then, we can represent $\mathcal P_i-h$ as follows.  Let $W=\{a_1,a_2,a_3,b_1,b_2,b_3\}$ be vertices of $\mathcal P_i-h$ such that $a_ib_j$ is an edge for every distinct $i, j\in [3]$, and for every $i\in [3]$ there is either an edge or a path of length 2 between $a_i$ and $b_i$. Let $\mathcal P$ be the collection of these paths of length 1 or 2. Let $M=\{a_1b_2,a_2b_3,a_3b_1\}$. For $i\in [3]$, let $f(a_i)=f(b_{i+2})=a_ib_{i+2}$ (with indices modulo 3). With these choices, $(h,W,\mathcal P,M,f)$ is a decomposition of $h$. 
 
    We then have that $\ell_P\leq 2$ for every $P\in \mathcal P\setminus M$ (of which there are 6), since either $P$ is an edge which is the $f$-image of 2 vertices of $W$ (hence $\ell_P=2$), or $P$ is a path with 2 edges but is not in the image of $f$ (hence $\ell_P=1$). In particular, $\mathds 1_\ell=0$. \cref{thm:main} then yields that $c(G)\leq 6$.
\end{proof}

As for lower bounds, we were unable to find any linklessly embeddable graph with cop number at least 4 (planar graphs being linklessly embeddable, the dodecahedral graph is an example with cop number 3 \cite{aigner_game_1984}). It hence remains open to determine what the maximum cop number of linklessly embeddable graphs is.

One might also be interested in relating the cop number to the {\it Colin de Verdi\`ere} spectral graph parameter $\mu(G)$. While we omit the formal definition here, Colin de Verdi\`ere showed that $\mu(G) \le 1$ if and only if $G$ is disjoint union of paths,  $\mu(G) \le 2$ if and only if $G$ is outerplanar, and $\mu(G) \le 3$ if and only if $G$ is planar \cite{colin_de_verdiere_sur_1990}. This pattern was extended by Van der Holst, Lov{\'a}sz and Schrijver who showed $\mu(G) \le 4$ if and only if $G$ is linklessly embeddable \cite{van_der_holst_colin_1999}.

For the first three of these classes, it turns out the upper bound on the cop number is the same as the upper bound on the Colin de Verdière invariant, and that the examples for which the cop number bound is tight are also tight for the Colin de Verdière bounds. It is trivial that paths have cop number $1$. Furthermore, if $G$ is a connected outerplanar graph, i.e. a graph which can be embedded in the plane without edge crossings and such that all vertices are on the outer face, then $c(G)\leq 2$. This was originally stated and proved by Clarke \cite{clarke_constrained_2002}, however it was also a consequence of Andreae's bound for $K_{2,3}$-minor-free graphs and $K_4$-minor-free graphs, as it is well-known that the outerplanar graphs are exactly the $\{K_{2,3},K_4\}$-minor-free graphs. Any outerplanar graph of cop number 2 (for example, a cycle of length at least 4), is necessarily not a path and thus has Colin de Verdière number 2. Finally, as mentioned earlier, Fromme and Aigner \cite{aigner_game_1984} have proved that any connected planar graph $G$ has $c(G)\le 3$. Any planar graph of cop number 3 (for example, the dodecahedral graph) is necessarily not outerplanar, and thus must have Colin de Verdière number 3.

Hence, one might wonder whether this pattern continues to linklessly embeddable graphs with an upper bound of 4 for the cop number in this class, and more generally whether $c(G)\leq \mu(G)$ for all connected graphs $G$.

\subsection{Greater improvement factor}\label{subsec:factor}
We have seen earlier that we can improve the bound on $H$-minor-free graphs by a factor of 3 (relative to the number of edges of $H-h$) when $H-h$ can be obtained by subdividing the edges of another graph many times, for instance when applying \cref{cor:simplepaths} to the wheel graph. In fact, in some cases we can essentially get an improvement factor of 4. Let us see an example.

Define $H_t$ to be the graph formed by identifying the end vertices of $m$ copies of a five vertex path, as shown in \cref{fig:factor4}. Recall that $U(H_t)$ is the graph $H_t$ with an additional universal vertex. \cref{thm:andreaemain} shows that if $G$ is a connected $U(H_t)$-minor-free graph, then $c(G)\leq 4m$. We can improve this result by almost a factor of 4.

\begin{figure}[H]
\begin{tikzpicture}[scale=1.4, dot/.style = {circle, fill, minimum
							size=#1, inner sep=0pt, outer sep=0pt}, dot/.default
							= 5pt
				]

				\node [dot] (A) at (0, 0) {};
				\node [dot] (B) at (4, 0) {};
				
				\node [dot] (11) at (1, 0) {};
				\node [dot] (12) at (2, 0) {};
				\node [dot] (13) at (3, 0) {};
				
				\node [dot] (21) at (1, 0.5) {};
				\node [dot] (22) at (2, 0.5) {};
				\node [dot] (23) at (3, 0.5) {};

				\node [dot] (31) at (1, 1) {};
				\node [dot] (32) at (2, 1) {};
				\node [dot] (33) at (3, 1) {};
				
				\node [dot] (41) at (1, -1) {};
				\node [dot] (42) at (2, -1) {};
				\node [dot] (43) at (3, -1) {};
				
				\node (dots) at (2, -0.5) {$\cdots$};
				
				\draw (A) to (11);
				\draw (11) to (12);
				\draw (12) to (13);
				\draw (13) to (B);
				
				\draw (A) to (21);
				\draw (21) to (22);
				\draw (22) to (23);
				\draw (23) to (B);
				
				\draw (A) to (31);
				\draw (31) to (32);
				\draw (32) to (33);
				\draw (33) to (B);
				
				\draw (A) to (41);
				\draw (41) to (42);
				\draw (42) to (43);
				\draw (43) to (B);
				
			\end{tikzpicture}
\caption{The graphs $H_t$.}\label{fig:factor4}
\end{figure}

\begin{cor}
	If $G$ is a connected $U(H_t)$-minor-free graph $(t\geq 1)$, then $c(G)\leq t+2$.
\end{cor}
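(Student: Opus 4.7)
The plan is to apply \cref{thm:main} directly to $H = U(H_t)$ with a carefully chosen decomposition. First I would take $h$ to be the universal vertex of $U(H_t)$, so that $H - h = H_t$ consists of the two poles $a$ and $b$ together with $t$ internally disjoint paths $P_1, \ldots, P_t$ of length $4$ between them. Then I would set $W = \{a, b\}$, $\mathcal{P} = \{P_1, \ldots, P_t\}$, $M = \emptyset$, and $f(a) = f(b) = P_1$; since both $a$ and $b$ are ends of $P_1$, the tuple $(h, W, \mathcal{P}, M, f)$ is a valid decomposition of $H$ in the sense of \cref{def:path-decomp}.

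Next I would compute the path parameters $\ell_P$ from \cref{def:length-param}. For $P_1$ the formula gives $\ell_{P_1} = \max(4 - 1 + 2, 1) = 5$, contributing $\lceil 5/3 \rceil = 2$, while for each $P_i$ with $i \geq 2$ it gives $\ell_{P_i} = \max(4 - 1 + 0, 1) = 3$, contributing $\lceil 3/3 \rceil = 1$. Since $\ell_{P_1} = 5 \notin \{0, 1, 2, 4\}$, the indicator $\mathds{1}_{\ell}$ equals $1$. Summing via \cref{thm:main} then yields $c(G) \leq 1 + 2 + (t-1) = t + 2$, as desired.

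The only real choice in the proof is where to direct $f$, and the key insight will be to route both poles through the same path $P_1$. This concentrates the ``$f$-cost'' on a single path and pays only one ceiling jump (from $\lceil 3/3 \rceil = 1$ up to $\lceil 5/3 \rceil = 2$), whereas the alternative choice $f(a) = P_1$, $f(b) = P_2$ would give $\lceil 4/3 \rceil + \lceil 4/3 \rceil = 4$ on those two paths in place of $2 + 1 = 3$, wasting a cop. No further tweak---subdividing a path by enlarging $W$, or extracting length-$1$ edges into the matching $M$---improves the bound, so there is essentially no obstacle beyond identifying the optimal decomposition; the proof then reduces to a one-line application of \cref{thm:main}.
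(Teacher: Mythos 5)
Your proposal is correct and uses exactly the same decomposition as the paper: $h$ the universal vertex, $W=\{a,b\}$, $\mathcal P=\{P_1,\dots,P_t\}$, $M=\emptyset$, and $f(a)=f(b)=P_1$, yielding $\ell_{P_1}=5$, $\ell_{P_i}=3$ for $i\geq 2$, $\mathds 1_\ell=1$, and hence $c(G)\leq 1+2+(t-1)=t+2$. The computation and the observation that concentrating $f$ on a single path is the optimal choice match the paper's argument.
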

\begin{proof}
    Let $h$ be the universal vertex of $U(H_t)$ and let $a,b$ be the two end vertices of $H_t$ (those obtained by identification). Let $W=\{a,b\}$ and let $\mathcal P=\{P_1,\dots,P_t\}$ be the collection of $t$ internally disjoint $a-b$ paths of length 4 of $H_t$. Finally, define $f$ by $f(a)=f(b)=P_1$. Then, $(h,W,\mathcal P,\emptyset,f)$ is a decomposition of $H$.

    We have that $\ell_{P_1}=5$ and $\ell_{P_i}=3$ for $i\in [t]\setminus\{1\}$. With these values, $\mathds 1_\ell=1$. \cref{thm:main} yields that
	$$c(G)\leq \mathds{1}_{\ell}+\left\lceil\frac{\ell_{P_1}}{3}\right\rceil+\sum_{i=2}^t\left\lceil\frac{\ell_{P_i}}{3}\right\rceil=1+2+(t-1)=t+2.$$
\end{proof}

\section{Future directions}

In \cref{subsec:factor}, we have seen that our results allow us to, in some cases, obtain an improvement of factor 4 over the previous results. There still appears to be a lot of work to be done further optimizing the upper bounds on the cop number when forbidding an minor, both for general classes of graphs and specific graphs.

It would be interesting to get a better upper bound on the cop number when forbidding multiple minors, especially when they are very similar (for instance, for linklessly embeddable graphs). This might in particular yield interesting results for various topological classes of graphs, where the obstruction set usually contains a large number of graphs.

Finding lower bounds, i.e. constructing graphs with some forbidden minor but relatively high cop number, also appears difficult.

\section*{Acknowledgements}
We thank the reviewers for their helpful comments.

\bibliographystyle{abbrvurl}
\bibliography{refs}

\end{document}